\documentclass[11pt,a4paper]{article}
\usepackage[utf8]{inputenc}
\usepackage{amsmath}
\usepackage{amsfonts}
\usepackage{amssymb}
\usepackage{amsthm}
\usepackage{hyperref}
\usepackage{graphicx}
\usepackage{caption}
\usepackage{subcaption}
\usepackage{float}
\usepackage{mathtools}
\usepackage{url}
\DeclareMathOperator*{\argmin}{argmin}
\usepackage{algorithm}
\usepackage{algorithmic}
\usepackage[T1]{fontenc}
\usepackage{titling}
\usepackage[toc,page]{appendix}
\usepackage{listings}
\usepackage{wrapfig}
\usepackage{setspace}
\usepackage{multirow}
\usepackage{xcolor}
\usepackage{natbib}
\usepackage{enumerate}

\newtheorem{theorem}{Theorem}[section]

\newtheorem{lem}[theorem]{Lemma}     
\newtheorem{proposition}[theorem]{Proposition}

\newtheorem{exmp}[theorem]{Example}
\newtheorem{conj}[theorem]{Conjecture}

\newtheorem{alg}[theorem]{Algorithm}

\usepackage[margin=0.8in]{geometry}

\usepackage{authblk}

\graphicspath{{../Figures/}}

\title{Robust Patrol in a Dispersed Environment}
\date{}

\author[1]{Edward Mellor\footnote{Funded by EPSRC}}
\author[2]{Kevin Glazebrook}
\author[3]{Kyle Lin}
\author[4]{Rob Shone\footnote{Corresponding Author}}
\affil[1]{STOR-i Centre of Doctoral Training, Lancaster University, Lancaster LA1 4YW, United Kingdom. Email: e.mellor2@lancaster.ac.uk}
\affil[2]{Lancaster University Management School, Lancaster University, Bailrigg, Lancaster LA1 4YX, United Kingdom. Email: k.glazebrook@lancaster.ac.uk}
\affil[3]{Operations Research Department, Naval Postgraduate School, 1 University Circle, Monterey, CA 93943, United States of America. Email: kylin@nps.edu}
\affil[4]{Lancaster University Management School, Lancaster University, Bailrigg, Lancaster LA1 4YX, United Kingdom. Email: r.shone@lancaster.ac.uk}

\begin{document}

\maketitle
\thispagestyle{empty}

\vspace{-2cm}

\begin{abstract}
We consider a patrol problem in which a patroller moves among several geographically dispersed locations to detect attackers that arrive over time.
Once the patroller arrives at a location, they can spend any amount of time searching for attackers at that location---and detect an attacker there with some location-dependent instantaneous detection rate---before moving to a different location.
The objective of the patroller is to minimize the expected time an attacker stays undetected at a location, regardless of where and when the attack occurs.
In the special case where travel times are negligible, we elucidate an optimal cyclic policy in which the patroller allocates a fixed fraction of their effort to each location continuously.
The patrol problem becomes significantly more challenging when travel times cannot be ignored.
We introduce two types of cyclic patrol patterns based on common patrol practice for perimeter patrol and border patrol, respectively, and derive formulae for the expected time to detect an attack in both cases.
We also provide an algorithm for finding the best search time parameters in both of these cases.
We give several examples where these cycle types perform well and numerically demonstrate that the optimal patrol policy depends highly on the structure and parameters of each patrol problem.
\end{abstract}

Keywords: Applied Probability; OR in Defense; Optimal Patrol; Game Theory

\newpage

\onehalfspacing

\section{Introduction} \label{sec: introduction}

There are many situations in which an organization must protect its assets from sabotage.
If these assets are in geographically dispersed locations, it can be prohibitively expensive to actively protect them all simultaneously. 
For example, when defending a large military facility, it is impractical to guard all sections of the facility's perimeter at the same time. 
Similarly, local law enforcement cannot be present in all parts of their jurisdiction at once.
To ensure that none of the assets are left undefended for too long, it is necessary for one or more agents, usually called patrollers, to sequentially visit and search these locations in order to detect attackers.

The study of patrol theory arose in the 1970s.
Much of the early work
%\delete{, such as that of \cite{Olson1975}, \cite{Chaiken1978} and \cite{Birge1989},} 
focused on optimally allocating police resources to intercept ongoing criminal activity.
\cite{chelst1978} and \cite{Chaiken1978} study this allocation problem in urban environments, % present algorithms for allocating multiple patrol vehicles among several regions with known crime rates in an urban environment. 
%These algorithms are shown to give significant improvements compared to simpler approaches based on allocating patrol resources in proportion to predicted crime rates for different regions.
%Patrol models outside of urban environments often require special treatment.
%\cite{lee1979} and \cite{taylor1985} consider the problem of patrolling highways.
%The work of \cite{Birge1989} involves developing specialized patrol methods for rural areas that put a greater emphasis on travel times between locations.}
whilst \cite{Birge1989} consider rural environments where greater emphasis is placed on travel times between locations. \cite{lee1979} and \cite{taylor1985} consider the allocation of police vehicles for the patrolling of highways.
In each case, the authors assume that the crime rates in different locations are known and use these rates to inform their policies.
\cite{szechtman2008} also consider a problem with known attack rates, this time in the context of border patrol.
While making use of known crime rates is rational, intelligent attackers may respond to the patroller's choices and try to act unpredictably to avoid detection. High-impact attacks, such as terrorism or coordinated attacks by a single adversary, are likely to be preceded by careful strategic planning.
%Strategic planning on the part of the attackers is likely for more impactful attacks, such as acts of terrorism, or when a single adversary is planning multiple coordinated attacks. 

Much of the more recent work focuses on game-theoretic models. % \delete{such as in \cite{Auger}, \cite{Alpern2006}, \cite{Lin2013} and \cite{Lin2014}}.
%A patrolling game typically involves two decision makers: a patroller (or a defender that coordinates multiple patrollers) and an attacker (or an adversary that coordinates multiple attackers).
%These decision-makers are usually in direct opposition to each other, with the attacker wanting to maximize the probability of attacks being successful and the patroller wanting to minimize this probability.
%Such games are often played on a graph.
%In discrete patrolling games attacks can only take place at the nodes of the graph, whereas in continuous patrolling games, attacks can also occur at the edges.
%Games can also take place in discrete time, where the patroller can move between adjacent locations at each time step, or in continuous time, where the patroller traverses the graph by traveling along its edges.
Motivated by recent developments in search games, \cite{alpern2011} propose a discrete patrolling game.
The game is set on a graph, with nodes used to represent the locations that can be attacked and edges to represent routes between the nodes. 
%The game takes place in discrete time.
An attack at any location takes $m$ time steps to complete, for some known $m \in \mathbb{N}$.
At each time step, the patroller can move to and search one location that is connected by an edge to their current location.
The attacker wins if the patroller does not arrive at the location under attack during the $m$ time steps; otherwise, the patroller wins.
\cite{Lin2013} consider a generalized version of this game where the time taken to attack each location is distributed randomly according to a known arbitrary distribution that can vary between locations. 
\cite{Lin2014} generalize this model further to allow for imperfect detection.
\cite{mcgrath2017robust} consider a different extension in which there is a deterministic travel time between each pair of locations.
\cite{papadaki2016} and \cite{alpern2019} develop  policies for patrolling games played on a line.  
Special attention is given to this scenario due to its potential application to patrolling a border.
\cite{alpern2016}, \cite{Garrac2019} and \cite{bui2023} all consider patrolling games where attacks may also take place along the edges of the graph, rather than just at the vertices. 

In this paper, we consider a patrol problem where a patroller is tasked with minimizing the total cost of damage caused by attacks across a set of $n$ geographically distinct locations.
An attack is broadly construed as an illicit activity undertaken by an attacker that has an ongoing negative effect.
For example, a spy could install a device to eavesdrop on classified conversations; an adversarial hacker could implant malicious firmware to steal data; and a terrorist could contaminate a drinking water system with toxic chemicals to endanger the population.
These attacks can occur at any time, but are directed to particular locations by an intelligent attacker. 
Once an attack has started, it stays hidden and continues to cause harm until it is discovered by the patroller.

The patroller moves between the locations and attempts to interrupt ongoing attacks.
The time required to travel from location $i$ to a different location $j$ is a fixed value, $d_{ij}\geq 0$.
%By definition, $d_{ij} \geq 0$ for all $i\neq j$ and $d_{ii} = 0$ for all $i$.
%We assume that a direct route from one location to another can take no longer than an indirect route since the patroller can always travel via other intermediate locations without searching them.
%$Thus for all distinct $i,j,k \in  [n]$, $d_{ik} \leq d_{ij} + d_{jk}$.
%However, we do not enforce that $d_{ij} = d_{ji}$ since, in reality, the direction of travel can affect the time required. For example, traveling up a hill, potentially with heavy search equipment, may take longer than making the reverse journey.
When they arrive at a location, the patroller can choose to spend any amount of time searching for attackers at that location.
An attacker present at location $i$ while the patroller is searching there is discovered at an instantaneous rate $\lambda_i>0$. 
%If more than one attacker is present then each attacker is detected independently at rate $\lambda_i$.
%Thus, the instantaneous discovery rate is $\lambda_i$ multiplied by the number of hidden attackers in that area. 
When an attacker is discovered, the attack ends instantly and the patroller is free to continue their search of that location or to move elsewhere.

The objective of the patroller is to minimize the expected time to discover an attacker regardless of where the attack occurs, if and when an attack occurs.
The rationale of this objective is that in many applications the patroller does not know where the attacker is likely to attack.
If an adversary is actively choosing a location to attack to cause maximal harm, then the patroller's objective is to minimize the maximal harm that the attacker can cause.
In the worst-case scenario, the adversary may be aware of the patroller's policy, although we always assume that they are unaware of the patroller's current location.
It is therefore beneficial for the patroller to construct a robust policy that minimizes the damage incurred in the worst case scenario where all attackers are directed to the least well defended location.
%To formulate this objective we shall assume that the attackers arrive according to a Poisson process and allow multiple attacks simultaneously.  
%The patroller's objective is to minimize the maximal expected time to discover an attack among all locations.
Formulating the problem in this way makes it a sequential move game where the patroller acts first by choosing a patrol strategy and the adversary moves second by choosing to attack the location that would maximize the expected time until discovery under this strategy.
A detailed problem formulation is provided in Section \ref{sec: a patrol model}.
%\new{Our goal is to minimize the expected time until discovery of an attacker regardless of where the attack occurs. Under the Poisson Process assumption on attack arrivals we have that, conditional on an attacker being present at a location, its time spent undiscovered is uniform over the appropriate time interval irrespective of the
%Poisson attack rate.
%Further, in our model, under any policy, the presence of additional attackers at a given location does not affect the expected time to discover a particular attacker. 
%Thus, the rate of the arrival process has no effect on the performance of the chosen policy.}
%\new{Another consequence of detection events being independent is that it is equivalent to minimizing the expected time to discover a single attacker. 
%In this case the attacker's arrival time is determined by a uniform distribution over a very large interval.}

The principal contributions of this paper include the following:
\begin{enumerate}
    \item  We consider the special case where all travel times are set to zero, and show that in this case %locations can be moved between instantly, allowing the patroller to effectively divide her search effort between multiple locations simultaneously. We show that in this case 
    it is optimal %whenever operating according to a patrol pattern which repeats (which we call a cycle) 
    to continuously allocate a constant fraction of search effort to each location. Furthermore, we show that at each location, the fraction of effort that should be allocated is inversely proportional to the detection rate.
    \item For the more general case of nonzero travel times we focus on two intuitively simple types of patrol policies, referred to as \textit{simple cycles} and \textit{sweep cycles}, which are motivated from situations where the locations are arranged in a ring and in a line, respectively. %A simple cycle is a patrol pattern that visits each location exactly once in a set order and then repeats. Thus, each search of a given location lasts for the same amount of time and then every other location is searched before the patroller returns to that location. To support the intuition of simple cycles being effective, we show that if a cycle visits a location twice then both of these visits should be of the same duration. 
    For simple cycles, we show that the optimal order in which locations are visited corresponds to a minimum Hamiltonian cycle, and in the special case where all detection rates are the same, all locations should be visited for the same amount of time. %A sweep cycle is motivated by scenarios where locations are spread out along a line. The patroller moves back and forth along the line in searching each location when passing in each direction. 
    We then show that when locations are arranged in a line, the best sweep cycle always performs better than the best simple cycle.
    \item For both cycle types we derive a formula for the maximum expected time to discovery across all locations. We present an algorithm to find the vector of search durations that minimizes this expression, and then use this algorithm to find the best simple cycle and the best sweep cycle in various numerical examples. For some specifically chosen scenarios we compute the best instances of other cycle types in order to demonstrate that an optimal patrol pattern does not necessarily need to be either a simple cycle or a sweep cycle.
\end{enumerate}

The remainder of the paper is organized as follows.
Section \ref{sec: a patrol model} introduces our patrol model formally.
Section \ref{sec: the case of no travel times} considers the special case in which there are no travel times between locations and presents an optimal cyclic policy in this case.
Section \ref{sec: a simple cycle} focuses on simple cycles and presents analyses to suggest that these perform well in a very broad class of patrol problems. 
Section \ref{sec: a sweep cycle} focuses on sweep cycles and shows that the best sweep cycle outperforms the best simple cycle on a line network. 
Section \ref{sec: Numerical Demonstration} presents an algorithm to compute the best simple cycle and the best sweep cycle.
Finally, Section \ref{sec: conclusions and further work} concludes the paper and gives some suggestions for further work.

\section{A Patrol Model} \label{sec: a patrol model}
Consider a set of $n$ locations $[n]:=\{1,2,...,n\}$ that are subject to attacks by an adversary.
A single patroller is tasked with patrolling these locations to detect attacks when they occur. 
For $i,j\in [n]$, we define $d_{ij}$ as the \emph{travel time} required for the patroller to move from location $i$ to $j$, with $d_{ii}:=0$ for $i\in [n]$ and $d_{ij}>0$ if $i\neq j$. We assume that travel times follow the triangle inequality, so that $d_{ik}\leq d_{ij}+d_{jk}$ for all distinct $i,j,k\in [n]$. We do not assume that $d_{ij}=d_{ji}$ for $i\neq j$, so travel times may be asymmetric.

In order to patrol these $n$ locations, the patroller must adopt a \emph{patrol schedule} $\sigma$, represented as a sequence of pairs $((x_1,t_1),(x_2,t_2),\ldots)$, where $x_j$ is the $j^{\text{th}}$ location visited, for $j\in\mathbb{N}$, and $t_j$ is the amount of time spent searching for attackers during that visit. 
For example, if a schedule $\sigma$ begins with the pairs $(1,0.5)$ and $(2,1.5)$, then the patroller begins by searching location 1 for 0.5 time units, then takes $d_{1,2}$ time units to move to location 2, then searches at location 2 for 1.5 time units. 
Without loss of generality, we label the patroller's starting location as location 1.

In the real world, attacks are typically rare events.
If no attack ever happens, then the patroller's patrol schedule is inconsequential.
If and when an attack occurs, the attacker chooses when and where to arrive, and starts to conduct illicit activities---such as stealing information, spreading rumors, or sabotaging infrastructures.
If the patroller is not present at the attacker's location, then the attacker stays hidden and continues to cause harm.
If the patroller is conducting a search at the attacker's location, then the patroller has an instantaneous detection rate $\lambda_i$ to find the attacker if they are both at location $i \in [n]$.
The patroller's objective is to minimize the expected time an attacker carries out illicit activities---starting when the attacker arrives until getting detected by the patroller---if and when an attacker eventually happens.

For patrol schedule $\sigma$, let $\Gamma^\sigma_i(t)$ denote the expected time it takes to detect an attacker that arrives at time $t \in[0, \infty)$ in location $i \in [n]$.
Because the attacker does not know the patroller's schedule $\sigma$ and chooses when to start an attack, the expected amount of time that the attacker can stay hidden until getting detected---if the attacker chooses location $i \in [n]$---can be written by 
\begin{align}
v_i = \lim_{t \rightarrow \infty} \frac{\int_0^t \Gamma_i^\sigma (t) \, dt}{t},
\label{eq:v_i}
\end{align}
if the limit exists.
Since the attacker can choose any location, the patroller's objective function is to determine the patrol schedule $\sigma$ that minimizes
$\max_{i \in [n]} v_i$.
In other words, the patroller wants to minimize the expected time to detect an attacker regardless of which location the attacker chooses to attack.

If there exists $i \in [n]$ such that location $i$ appears in the patrol schedule $\sigma$ only a finite number of times, then the limit in \eqref{eq:v_i} does not exist, because the ratio approaches infinity.
Therefore, to determine the optimal patrol schedule, it is sufficient to consider schedules in which each location appears infinitely many times.
In this paper, we focus on \textit{cyclic} schedules that instruct the patroller to follow a \textit{patrol pattern} and repeat it indefinitely.
A patrol pattern can be written by a finite sequence of pairs  $((x_1,t_1),(x_2,t_2), \ldots, (x_m, t_m))$, where $x_j$ is the $j^{\text{th}}$ location visited, for $j=1,2,\ldots,m$, and $t_j$ is the corresponding search time during that visit.
The patrol pattern in a cyclic schedule can be arbitrarily long, but as long as $m$ is finite, then the limit in \eqref{eq:v_i} is properly defined.

\section{The Case of No Travel Times} \label{sec: the case of no travel times}

Consider a special case of the patrol problem presented in Section \ref{sec: a patrol model} where there are no travel times.
In other words, $d_{ij} = 0$ for all $i,j \in [n]$.
Since the patroller can move between any pair of locations instantaneously, we allow the patroller to allocate their effort among multiple locations at the same time.
For each $i \in [n]$, if the fraction of effort the patroller allocates to location $i$ at time $t \geq 0$ is $h_i(t) \in [0,1]$, with $\sum_{i=1}^n h_i(t) = 1$, then at time $t$ the instantaneous detection rate at location $i$ is $\lambda_i h_i(t)$, for $i=1,\ldots,n$.

In this section, we show that the optimal policy in this special case is for the patroller to allocate a certain proportion of their effort to each location which remains constant throughout, and calculate the proportion explicitly.
Initially, we require the following lemma.

\begin{lem}
\label{lem: am>=hm}
Suppose $g(x)$ is a function defined on $x \in [0,c]$ for some $c \in \mathbb{R}^+$.
If there exist $a,b \in \mathbb{R}^+$ with $a < b$ such that $g(x) \in [a,b]$ for $x \in [0,c]$, then
\[
\frac{\int_0^c g(x) dx }{c} \geq \frac{c}{\int_0^c \frac{1}{g(x)} dx},
\]
with equality if and only if $g(x)$ is constant for $x \in [0,c]$.
\end{lem}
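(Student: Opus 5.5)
The plan is to derive the inequality from the Cauchy--Schwarz inequality in $L^2[0,c]$, applied to $\sqrt{g}$ and $1/\sqrt{g}$. The bounds $g(x)\in[a,b]$ with $a>0$ make both functions bounded. I will tacitly assume $g$ is (Lebesgue) integrable, as the statement implicitly requires. Then $1/g$ is integrable as well, since $1/g\in[1/b,1/a]$, and both integrals are finite and strictly positive.

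First step: write
\[
c^2=\left(\int_0^c \sqrt{g(x)}\cdot\frac{1}{\sqrt{g(x)}}\,dx\right)^2\le \int_0^c g(x)\,dx\int_0^c \frac{1}{g(x)}\,dx .
\]
Dividing both sides by $c\int_0^c \frac{1}{g(x)}dx>0$ gives exactly the claimed inequality
\[
\frac{\int_0^c g(x)\,dx}{c}\ge \frac{c}{\int_0^c \frac{1}{g(x)}\,dx}.
\]
As an alternative route, one could note that $(\sqrt{g(x)/g(y)}-\sqrt{g(y)/g(x)})^2\ge 0$ and integrate over $[0,c]^2$, which gives the same inequality.

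Second step: the equality case. If $g\equiv k$, both sides equal $k$, so equality holds. Conversely, equality in Cauchy--Schwarz forces $\sqrt{g}$ and $1/\sqrt{g}$ to be linearly dependent in $L^2$. So $\sqrt{g}=\mu/\sqrt{g}$ almost everywhere for some constant $\mu$, i.e.\ $g=\mu$ a.e. Equivalently, in the double-integral form, equality forces
\[
\int_0^c\!\!\int_0^c \frac{(g(x)-g(y))^2}{g(x)g(y)}\,dx\,dy=0,
\]
so $g(x)=g(y)$ for almost every pair $(x,y)$.

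The main obstacle I expect is this equality characterisation. Cauchy--Schwarz only yields "constant almost everywhere", while the lemma says "constant". I would handle it by interpreting "constant" up to null sets, which is all that matters for the integrals, since changing $g$ on a null set alters neither side. If genuine pointwise constancy is intended, I would add a continuity assumption on $g$ (natural for effort allocations $h_i(t)$ with well-defined rates). Under continuity, a continuous function that is constant a.e.\ is constant everywhere, which closes the gap.
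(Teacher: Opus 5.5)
Your proof is correct, but it takes a different route from the paper's. The paper sets $X$ uniform on $[0,c]$ and $Y=g(X)$, then applies Jensen's inequality to the convex function $\phi(y)=1/y$ to get $1/E[g(X)]\le E[1/g(X)]$, and finishes by rewriting the expectations as integrals and inverting. For the equality clause it only notes that a constant $g$ gives equality; it never argues the converse.

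You instead obtain the product form $c^2\le\int_0^c g\,dx\int_0^c g^{-1}\,dx$ directly from Cauchy--Schwarz applied to $\sqrt{g}$ and $1/\sqrt{g}$ (or from the symmetric double integral). This avoids the probabilistic reformulation. More usefully, it gives the ``only if'' direction at no extra cost, through the equality case of Cauchy--Schwarz or the vanishing of $\int\!\!\int (g(x)-g(y))^2/(g(x)g(y))\,dx\,dy$. Jensen's route can also yield the converse, but only by invoking strict convexity of $1/y$ and the matching equality characterisation, which the paper omits.

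Your remark that equality forces $g$ to be constant only almost everywhere is a genuine correction to the statement as written. Changing $g$ at a single point leaves both sides unchanged, so the literal ``only if'' fails without an a.e.\ reading or a continuity assumption. This does not damage the paper, because Theorem~\ref{th:one_location} rests only on the inequality and the ``if'' direction. Both proofs tacitly assume $g$ is integrable, as you made explicit.
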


\begin{proof}
Let $g(x)$ be a function defined on $x \in [0,c]$ for some $c \in \mathbb{R}^+$.
Also let  $\phi(y) = 1/y$ and $Y = g(X)$ where $X$ is a uniform random variable over $[0,c]$.
Jensen's inequality states that if $\phi$ is convex and $Y$ is an integrable real valued random variable then
$$ \phi( E[Y]) \leq  E[ \phi( Y)]. $$
Since $\phi(y)$ is clearly convex, it follows that 
$$ \frac{1}{E[g(X)]} \leq  E\left(\frac{1}{g(X)}\right).$$
Expressing these expectations as integrals gives us 
$$ \frac{c}{\int_0^c g(x) dx} \leq \frac{\int_0^c \frac{1}{g(x)}dx}{c}.$$
Inverting both sides yields the desired inequality
$$ \frac{\int_0^c g(x) dx }{c} \geq \frac{c}{\int_0^c \frac{1}{g(x)} dx}. $$
It is clear to see that if $g(x)$ is constant for $x \in [0,c] $ then equality is achieved.
\end{proof}
This lemma is the continuous version of the inequality that states that the arithmetic mean is greater than or equal to the harmonic mean.

%Recall that attackers arrive into the system according to a Poisson process and that the patroller's objective is to minimize the maximum expected time an attacker spends at a location before being detected across all locations.

\begin{theorem}
\label{th:one_location}
Consider a particular location and write $\lambda$ for the patroller's instantaneous detection rate at this location.
If the long-run fraction of effort the patroller spends at this location is capped at $r \in (0,1)$, then to minimize the expected time to detect an attacker at this location, it is optimal for the patroller to continuously allocate the same fraction of effort $r$ at this location at all times.
The minimized expected time to detection is $1 / (r \lambda)$.
\end{theorem}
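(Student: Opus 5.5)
We will restrict attention to cyclic effort profiles, as the paper does. Let $h(t)\in[0,1]$ be the fraction of effort at this location, periodic with period $c$, and let $\int_0^c h(s)\,ds \le rc$ express the cap on the long-run fraction. Define the cumulative effort $H(t)=\int_0^t h(s)\,ds$. An attacker arriving at time $t$ is undetected at time $t+u$ with probability $\exp\{-\lambda(H(t+u)-H(t))\}$. Hence
\[
\Gamma(t)=\int_0^\infty \exp\{-\lambda(H(t+u)-H(t))\}\,du ,
\]
and by periodicity the quantity $v$ in \eqref{eq:v_i} equals $\frac1c\int_0^c\Gamma(t)\,dt$.

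The key step is to change variables from clock time to effort time. Set $y=\lambda H$, which is nondecreasing, and use it to rewrite $\Gamma$ and its average. We will not apply Lemma \ref{lem: am>=hm} to $h$ directly, since the lower bound $a>0$ may fail (for instance, $h$ may vanish on intervals). Instead we use a more direct bound. For each arrival time $t$, let $T(t)$ be the random detection delay. Detection occurs at the first effort level $E\sim\mathrm{Exp}(1)$, that is, at the first $u$ with $\lambda(H(t+u)-H(t))\ge E$.

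Average over $t$ uniform on $[0,c]$. Then $t+T$ is the clock time at which cumulative effort reaches $\lambda H(t)+E$. For a fixed effort increment $e$, the map $t\mapsto \tau(\lambda H(t)+e)-t$ has a simple average. Here $\tau$ denotes the generalised inverse of $\lambda H$. By Jensen/AM--HM in the form of Lemma \ref{lem: am>=hm}, applied to the inverse function $\tau' = 1/(\lambda h)$, we get: the average clock time needed to accumulate effort $e$, starting from a uniformly random clock time, is at least $e/(\lambda \bar h)$, where $\bar h=\frac1c\int_0^c h \le r$. Taking expectation over $E$ then gives $v\ge 1/(\lambda\bar h)\ge 1/(r\lambda)$.

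The main obstacle is proving this inequality for the ``uniform clock time'' average. A random start time is size-biased toward periods of low effort, and this is exactly the inspection-paradox effect that the convexity of $1/g$ must capture. We will first attempt a direct argument. Write $\frac1c\int_0^c(\tau(\lambda H(t)+e)-t)\,dt$ and substitute $s=\lambda H(t)$, so that $dt=ds/(\lambda h)$. This expresses the average as an integral over effort time weighted by $1/(\lambda h)$. We then apply Lemma \ref{lem: am>=hm} on intervals where $h$ is bounded away from $0$, and handle zero-effort stretches separately: these only increase the delay. Alternatively, we can argue at the level of $\Gamma$ itself, in which case convexity of $x\mapsto e^{-x}$ combined with Jensen should also suffice.

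Finally, achievability is immediate. For $h\equiv r$, detection is exponential with rate $r\lambda$ from any arrival time, so $\Gamma(t)=1/(r\lambda)$. Equality in the Jensen step requires $h$ to be constant, which matches the equality case of Lemma \ref{lem: am>=hm}. Therefore the constant profile $r$ is optimal, and the minimum expected detection time is $1/(r\lambda)$.
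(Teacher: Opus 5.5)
\textbf{Gap: the per-increment bound is false.} Your proof rests on the claim that, for each \emph{fixed} effort increment $e$, the uniform-clock-time average of $D_e(t)=\tau(\lambda H(t)+e)-t$ is at least $e/(\lambda\bar h)$. That claim fails, so no refinement of the AM--HM step can prove it, whether applied on intervals where $h>0$ or with zero-effort stretches handled separately.

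A counterexample with a dead stretch: take $\lambda=1$, $c=1$, $h=1$ on $[0,\tfrac12)$ and $h=0$ on $[\tfrac12,1)$, so $\bar h=\tfrac12$. For $0<e\le\tfrac12$ the average of $D_e$ is $\tfrac32e+\tfrac18$, which is below $2e$ whenever $e>\tfrac14$. At $e=\tfrac12$, a start in the active stretch needs time $1$, but a start at $t$ in the dead stretch needs only $\tfrac32-t$. The average is $\tfrac78<1$, so dead stretches do not ``only increase the delay''.

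The failure is not caused by zeros. For $h>0$, your own substitution $s=\lambda H(t)$ gives exactly
\[
\frac1c\int_0^c D_e(t)\,dt=\frac{e}{\lambda\bar h}+\frac1c\int_0^e\Bigl(C(w)-\frac{c^2}{Y}\Bigr)\,dw ,
\]
where $Y=\lambda\bar h c$ is the effort per period and $C(w)=\int_0^Y\tau'(s)\tau'(s+w)\,ds$. Since $C$ is even, $Y$-periodic and has mean $c^2/Y$, the correction $F(e)=\int_0^e(C-c^2/Y)$ satisfies $F(Y-e)=-F(e)$. AM--HM (Lemma~\ref{lem: am>=hm}, equivalently Cauchy--Schwarz) only gives $C(0)>c^2/Y$ for non-constant $h$, hence $F>0$ for small $e$. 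By the antisymmetry this forces $F<0$ for increments just below $Y$.

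So your intermediate inequality fails for every non-constant profile. Indeed, with a deterministic effort threshold the constant profile is not optimal. The theorem genuinely depends on the law of $E$, through convexity of its survival function $x\mapsto e^{-x}$. Your route bounds pointwise in $e$ before that property is ever used.

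\textbf{The fix is your own one-line alternative.} Apply Jensen at a fixed \emph{clock} lag $u$ rather than at a fixed effort increment. By Tonelli,
\[
v=\int_0^\infty\frac1c\int_0^c e^{-\lambda(H(t+u)-H(t))}\,dt\,du .
\]
Periodicity gives $\frac1c\int_0^c\bigl(H(t+u)-H(t)\bigr)\,dt=u\bar h$ exactly, so convexity of $e^{-x}$ bounds the inner average below by $e^{-\lambda\bar h u}$. Integrating over $u$ gives $v\ge 1/(\lambda\bar h)\ge 1/(\lambda r)$. Equality holds only if $H(t+u)-H(t)$ is independent of $t$, i.e.\ $h$ is constant, so $h\equiv r$ is optimal.

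Written out, this is a complete proof. It is shorter than the paper's and needs no lower bound on $h$.

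The paper argues differently. It derives $g'(t)=\lambda h(t)g(t)-1$ for the expected delay $g$, and integrates $h=(g'+1)/(\lambda g)$ over a period to get $\int_0^c 1/g=\lambda rc$. It then applies the AM--HM lemma to $g$, which is bounded below by $1/\lambda$, rather than to $h$. That sidesteps exactly the difficulty you were worried about.

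As submitted, however, your main line rests on a false inequality, and the argument that actually works is only gestured at.
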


\begin{proof}
An arbitrary policy can be delineated by some $c > 0$ and a function $h(t) \in [0,1]$ defined for $t \in [0,c]$ with the interpretation that the patroller allocates $h(t) \in [0,1]$ fraction of their effort at time $t \in [0,c]$ such that
\begin{equation}
\frac{\int_0^c h(t) dt}{c} = r,
\label{eq:r}
\end{equation}
and repeats the same pattern of length $c$ indefinitely.
It follows from renewal reward theory that the long-run fraction of effort allocated to this location is $r$.

Because an attacker does not know the patroller's real-time location and chooses the arrival time arbitrarily, the attacker is equally likely to arrive at any point during the cycle.
Write $g(t)$ for the expected time for the patroller to detect an attacker that arrives at time $t \in [0,c]$, so we seek to show that choosing $h(t) = r$ minimizes
\begin{equation}
\int_0^c g(t) \; \frac{1}{c} \; dt = \frac{\int_0^c g(t) dt}{c}.
\label{eq:obj}
\end{equation}
Consider an attacker arriving at time $t$ and condition on whether the attacker is detected in the interval $[t, t+ \delta)$ to get
\[
g(t) = \delta  + (1 - \lambda h(t) \delta + o (\delta)) \;  g(t+\delta),
\]
because with probability $1 - \lambda h(t) \delta + o (\delta)$ the attacker is still at the location at time $t + \delta$ so the expected additional time to detection is $g (t + \delta)$.
Rearrange the preceding to get
\[
\frac{g(t+ \delta) - g(t)}{\delta} = \left( \lambda h(t) + \frac{o(\delta)}{\delta} \right) g(t+ \delta) - 1.
\]
Taking $\delta \rightarrow 0$ yields
\[
g'(t) = \lambda h(t) g(t) -1.
\]
Solve the preceding for $h(t)$ to get
\[
h(t) = \frac{g'(t)}{\lambda g(t)} + \frac{1}{\lambda g(t)}.
\]
Because the allocation function $h(t)$ needs to meet the constraint in \eqref{eq:r}, we have that
\begin{align*}
r c 
&= \int_0^c h(t) dt \\
&= \int_0^c \frac{g'(t)}{\lambda g(t)} dt +  \int_0^c  \frac{1}{\lambda g(t)} dt \\
&= \frac{1}{\lambda} \ln g(t) \big|_0^c + \frac{1}{\lambda} \int_0^c \frac{1}{g(t)} dt \\
&= 0 + \frac{1}{\lambda} \int_0^c \frac{1}{g(t)} dt.
\end{align*}

The last equality follows because $g(c) = g(0) \geq 1/\lambda$ and the patrol pattern of length $c$ is repeated indefinitely. Thus, the expected time to detection is at least $1/\lambda$, which can be achieved only if the patroller allocates all their effort at this location all the time.
Consequently, we have
\[
\int_0^c \frac{1}{g(t)} dt = \lambda r c.
\]

Using Lemma~\ref{lem: am>=hm} and the preceding, we can find a lower bound for our objective function in \eqref{eq:obj} as follows:
\begin{equation}
\frac{\int_0^c g(t) dt}{c} \geq \frac{c}{\int_0^c \frac{1}{g(t)} dt} = \frac{c}{\lambda r c} = \frac{1}{\lambda r}.
\label{eq:one_LB}
\end{equation}
In other words,  $1 / (\lambda r)$ is a lower bound for the expected time to detect an attacker at this location, and this lower bound is achieved if $g(t)$ is constant for $t \in [0,c]$.
By taking $h(t) = r$ for $t \in [0,c]$, the instantaneous detection rate at the location stays at $r \lambda$ at all times, so $g(t) = 1/ (\lambda r)$ for $t \in [0,c]$, which achieves the lower bound in \eqref{eq:one_LB}.
Consequently, the optimal policy is to take $h(t) = r$ for $t \in [0,c]$, which completes the proof.
\end{proof}

Suppose we are told that the long-run fraction of effort that the patroller should spend at location location $i$ is $r_i$ for each $i \in [n]$. 
By applying Theorem \ref{th:one_location}, we know that the patroller should allocate a constant fraction of effort to each location at all times according to the given ratio.
The next theorem presents an optimal policy for the patroller.

\begin{theorem}
\label{th:no travel}
Consider the patrol problem with $n$ locations with $d_{ij}=0$, for all $i,j \in [n]$.
Let $\lambda_i$ denote the instantaneous detection rate of the patroller at location $i$, for $i=1,\ldots,n$.
The optimal patrol policy is to allocate $r_i$ fraction of time at location $i$ continuously, where 
\begin{equation}
r_i = \frac{1/\lambda_i}{\sum_{j=1}^n 1/ \lambda_j}.
\label{eq:optimal_allocation}
\end{equation}
The expected time to detect the attacker regardless of where the attacker arrives is equal to 
\[
\sum_{j=1}^n \frac{1}{\lambda_j}.
\]
\end{theorem}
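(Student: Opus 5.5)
The plan is to reduce the $n$-location problem to $n$ single-location problems via Theorem~\ref{th:one_location}, and then solve a finite-dimensional minimax problem over the vector of long-run effort fractions. Take an arbitrary cyclic policy $h(t)=(h_1(t),\ldots,h_n(t))$ of period $c$ with $\sum_i h_i(t)=1$. Let $r_i := \frac{1}{c}\int_0^c h_i(t)\,dt$ be the long-run fraction of effort spent at location $i$, so that $r_i\ge 0$ and $\sum_i r_i = 1$.

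If some $r_i=0$, the attacker at $i$ is never detected and $v_i=\infty$, so such policies are irrelevant. If some $r_i=1$, then every other $r_j=0$, which is the same case. So we may take each $r_i\in(0,1)$. For every $i$, the argument in the proof of Theorem~\ref{th:one_location} applies verbatim to the marginal allocation $h_i$. It uses only the ODE $g_i' = \lambda_i h_i g_i - 1$, periodicity, and Lemma~\ref{lem: am>=hm}, and gives
\[
v_i \;\ge\; \frac{1}{\lambda_i r_i}.
\]
Hence for any policy
\[
\max_i v_i \;\ge\; \max_i \frac{1}{\lambda_i r_i}.
\]

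The next step is the minimax over the simplex. The key identity is
\[
1=\sum_i r_i=\sum_i \frac{1/\lambda_i}{1/(\lambda_i r_i)}.
\]
Let $M=\max_i 1/(\lambda_i r_i)$. Then each $r_i = \frac{1/\lambda_i}{1/(\lambda_i r_i)} \ge \frac{1/\lambda_i}{M}$, and summing over $i$ gives $1\ge \frac{1}{M}\sum_j 1/\lambda_j$, that is, $M\ge \sum_j 1/\lambda_j$. Equality forces $1/(\lambda_i r_i)=M$ for every $i$, which is exactly $r_i$ as in \eqref{eq:optimal_allocation}.

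For achievability, take the constant allocation $h_i(t)\equiv r_i$ with these $r_i$. Then $\sum_i h_i(t)=1$, and the detection rate at location $i$ is constantly $\lambda_i r_i$. So detection at $i$ is exponential and $v_i = 1/(\lambda_i r_i) = \sum_j 1/\lambda_j$ for all $i$, which matches the lower bound and proves both claims of the theorem.

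The main obstacle is justifying the per-location lower bound for a general joint policy, since Theorem~\ref{th:one_location} is phrased for a single location with a cap $r$. I would handle this by observing that its proof never used the other locations: it only needs $h_i(t)\in[0,1]$ with average $r_i$. One should also check that $g_i$ is bounded between positive constants, so that Lemma~\ref{lem: am>=hm} applies. This holds because $r_i>0$ ensures detection with positive probability in each period, giving $g_i$ finite and bounded, and $g_i\ge 1/\lambda_i>0$.
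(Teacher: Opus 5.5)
Your proposal is correct and follows the paper's proof. Both apply Theorem~\ref{th:one_location} to each marginal allocation $h_i$ to get $v_i \ge 1/(\lambda_i s_i)$, minimize $\max_i 1/(\lambda_i s_i)$ over the simplex, and attain the bound with the constant allocation $h_i(t)\equiv r_i$. The differences are that you actually prove the simplex minimax step via $r_i \ge (1/\lambda_i)/M$ and summing (the paper only asserts it), and you explicitly handle the degenerate cases $r_i\in\{0,1\}$ and the boundedness hypothesis of Lemma~\ref{lem: am>=hm}.
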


\begin{proof}
An arbitrary policy can be delineated by some $c > 0$ and a function $h_i(t) \in [0,1]$ defined for $t \in [0,c]$ with the interpretation that the patroller allocates $h_i(t) \in [0,1]$ fraction of their effort at location $i$, $i=1,\ldots,n$, at time $t \in [0,c]$ such that
\[
\sum_{i=1}^n h_i(t) = 1,
\]
for $t \in [0,c]$,
and repeats the same pattern of length $c$ indefinitely.

Compute
\[
s_i = \frac{\int_0^c h_i(t) dt}{c},
\]
which represents the long-run fraction of effort allocated to location $i$, for $i=1,\ldots,n$.

If the fraction of the patroller's effort allocated to location $i$ is $s_i$, then according to Theorem~\ref{th:one_location}, the expected time to detect an attacker at location $i$ is at least $1/ (\lambda_i s_i)$.
Therefore, the value of the proposed policy is at least
\[
\max_{i=1,\ldots,n} \frac{1}{\lambda_i s_i}.
\]

By taking $s_i = r_i$ given in \eqref{eq:optimal_allocation}, for $i=1,\ldots,n$, we minimize the preceding to $\sum_{j=1}^n 1/\lambda_j$, which is therefore a lower bound for the optimal value.
Finally, because this lower bound can be achieved by taking $h_i(t) = r_i$ for $t \in [0,c]$ and $i=1,\ldots,n$, as shown in Theorem~\ref{th:one_location}, it follows that $\sum_{j=1}^n 1/\lambda_j$ is the optimal value and the policy just described is the optimal policy.
\end{proof}

Theorem~\ref{th:no travel} shows that in the case of no travel times, it is optimal for the patroller to continuously allocate to a location the same fraction of effort that is inversely proportional to the detection rate at that location.
If the travel time is nonzero, then at any given time the patroller is either moving between locations, or allocating 100\% effort at one location.
It then becomes much more difficult to determine the optimal patrol strategy.

\section{A Simple Cycle} \label{sec: a simple cycle}
For a patrol problem with $d_{ij} > 0$ for all $i,j \in [n]$, an arbitrary patrol schedule can be delineated by an infinite sequence of locations along with their corresponding search durations.
We say a patrol schedule is \textit{cyclic} if it repeats the same patrol \textit{pattern} infinitely many times.
For example, a patrol pattern represented by the pattern $((1,1.3), (2,2.4),(1,0.6), (3,1.5))$ means that the patroller first spends 1.3 time units at location 1, then moves to location 2 and spends 2.4 time units there, then moves to location 1 and spends 0.6 time units there, then moves to location 3 and spends 1.5 time units there, then moves to location 1 and repeats.
We say that a patrol pattern is a \textit{simple cycle} if each location appears in the patrol pattern exactly once.
For example, for a patrol problem with $n=3$ locations, then any patrol pattern $((1, t_1),(2, t_2),(3,t_3))$ with $t_i > 0$, $i=1,2,3$, is a simple cycle.

There are several practical reasons to use a simple cycle.
First, a simple cycle reduces the number of decision variables, making it easier to optimize.
Second, from a practical perspective, a simple cycle is straightforward to implement.
Third, a simple cycle is natural to use if the patrol locations are laid out in a circle---such as the boundary of a military base.
Below we show that a simple cycle always outperforms a \textit{double cycle}---a patrol pattern with which the patroller visits each location twice via the same Hamiltonian cycle, with possibly different search times during the two visits.
We begin with a proposition that concerns the standpoint from a single location.

\begin{proposition} \label{prop: 2 visits propersition}
    %Any cycle that visits a location twice for different durations can be improved by replacing the duration of each search by the average of both.
    Consider a patrol pattern of length $c$ time units, during which the patroller visits some location twice for a total of $s < c$ time units.
    The expected time to detect an attacker at that location is minimized if the patroller spends $s/2$ time units during each visit to that location, separated by $(c-s)/2$ time units between successive visits.
    \end{proposition}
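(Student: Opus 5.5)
We set up the problem explicitly. Since only the location in question matters, we collapse everything else into idle time. We write the pattern, as seen from that location, as: a search of length $a$, an absence of length $u$, a search of length $b=s-a$, and an absence of length $w=c-s-u$, after which the pattern repeats. The claim is that the expected detection time $V(a,u)$, for an attacker whose arrival is uniform over the cycle (as in the proof of Theorem~\ref{th:one_location}), is minimized over $a\in[0,s]$ and $u\in[0,c-s]$ at $a=s/2$, $u=(c-s)/2$. Throughout we write $p=e^{-\lambda a}$ and $q=e^{-\lambda b}$.

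\textbf{Step 1 (renewal equations).} Let $E_A$ and $E_B$ be the expected times to detection for an attacker present at the start of the first and second search, respectively. Conditioning on whether detection occurs during the current search gives
\[
E_A=\frac{1-p}{\lambda}+p\,(a+u+E_B),\qquad E_B=\frac{1-q}{\lambda}+q\,(b+w+E_A),
\]
which we solve explicitly, using $1-pq=1-e^{-\lambda s}$, a constant. For an arrival during an absence with $x$ time units remaining, the expected detection time is $x$ plus the appropriate $E$. For an arrival during a search with $y$ time units remaining, it is $(1-e^{-\lambda y})/\lambda+e^{-\lambda y}(\text{next gap}+\text{next }E)$. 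Integrating these four pieces gives a closed form
\[
cV=\tfrac12u^2+\tfrac12w^2+uE_B+wE_A+\int_0^a\!\Big(\tfrac{1-e^{-\lambda y}}{\lambda}+e^{-\lambda y}(u+E_B)\Big)dy+\int_0^b\!\Big(\tfrac{1-e^{-\lambda y}}{\lambda}+e^{-\lambda y}(w+E_A)\Big)dy .
\]

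\textbf{Step 2 (symmetry and gap optimization).} The expression is invariant under the relabelling $(a,u)\leftrightarrow(b,w)$, so $V(a,u)=V(s-a,c-s-u)$, and the claimed point is the fixed point of this involution. Hence it suffices to show that $V$ has a unique minimizer. For fixed $a$, after substituting $E_A$ and $E_B$, $V$ is a polynomial of degree two in $u$. We expect its leading coefficient to be positive, because the $\tfrac12u^2+\tfrac12w^2$ terms dominate. We then compute the minimizing $u^*(a)$ in closed form. At $a=s/2$, symmetry forces $u^*=(c-s)/2$.

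\textbf{Step 3 (the split of search time, the main obstacle).} Substituting $u^*(a)$ reduces the problem to a single-variable function $F(a)=V(a,u^*(a))$ on $[0,s]$, symmetric about $s/2$. We must show that $F'(a)<0$ for $a<s/2$. Joint convexity of $V$ in $(a,u)$ is not evident, because of the exponentials, so this is the step we expect to be hardest. We will first try writing $F$ in terms of $p$ alone, with $q=e^{-\lambda s}/p$, and the variable $z=p-q$ or $\ln p-\ln q$. This should reduce the problem to an expression that is even in $z$, and we would then show it is increasing in $|z|$, for instance by grouping the terms into forms like $\cosh$ of $\lambda(a-b)/2$ with nonnegative coefficients. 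If that fails, we fall back on a direct comparison: show $V(a,u)\ge V(s/2,(c-s)/2)$ via an AM--HM/Jensen argument in the spirit of Lemma~\ref{lem: am>=hm}, applied to the averaged pair $\tfrac12[V(a,u)+V(s-a,c-s-u)]=V(a,u)$.
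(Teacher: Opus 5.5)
Your framework matches the paper's: condition on the phase of the cycle in which the attacker arrives, obtain a closed form, then minimize. However, the one formula you commit to is wrong. With $T$ the detection time, exponential with rate $\lambda$, the term $\frac{1-p}{\lambda}=\mathbb{E}[\min(T,a)]$ already includes the $a$ time units that elapse when the first search fails. The renewal equations must therefore read $E_A=\frac{1-p}{\lambda}+p(u+E_B)$ and $E_B=\frac{1-q}{\lambda}+q(w+E_A)$; your extra terms $pa$ and $qb$ double-count search time. Your within-search formula correctly omits this term, so your two formulas are also inconsistent with each other. Solving the correct system gives $E_A=\frac{1}{\lambda}+\frac{pu+pqw}{1-e^{-\lambda s}}$, which is the paper's $g(0)$, and symmetrically for $E_B$. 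With your version, the spurious terms would propagate into $V$.

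More seriously, the proposal stops exactly where the proof has to happen. Positivity of the $u^2$-coefficient is only expected, and Step 3 is a list of things to try. The fallback is empty: averaging $V(a,u)$ with its reflection returns $V(a,u)$ itself, and no convex function is identified for Jensen. The missing idea is that once the four integrals are evaluated, everything collapses. Using $u+w=c-s$, the paper's display for $\mathbb{E}[T]$ becomes
\[
cV=\frac{2c-s}{\lambda}+\frac{(c-s)^2\,(1+e^{-\lambda s})}{2\,(1-e^{-\lambda s})}-uw\,\frac{(1-e^{-\lambda a})(1-e^{-\lambda b})}{1-e^{-\lambda s}} .
\]
So the split of search time enters only through $e^{-\lambda a}+e^{-\lambda b}$, with the nonnegative coefficient $uw/(1-e^{-\lambda s})$. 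No joint convexity or sign analysis of $F'$ is needed:
\begin{enumerate}
\item For any fixed gaps, convexity of $x\mapsto e^{-\lambda x}$ forces $a=b=s/2$; this is exactly the paper's argument.
\item For any split, $uw$ is maximized at $u=w=(c-s)/2$. In fact $u^*(a)=(c-s)/2$ for every $a$, not only at $a=s/2$.
\end{enumerate}
Your $\cosh$ idea would have succeeded, since $e^{-\lambda a}+e^{-\lambda b}=2e^{-\lambda s/2}\cosh\bigl(\lambda(a-b)/2\bigr)$. As submitted, though, the decisive step is a plan rather than a proof.
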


\begin{proof}
From the standpoint of the selected location, the patrol pattern can be simply delineated by $(x_1, y_1, x_2, y_2)$, with $x_1+x_2=s$ and $y_1 + y_2= c-s$, where $x_1$ and $x_2$ represent the search times during the two visits and $y_1$ and $y_2$ represent the times when the patroller is away from this location.

Write $T$ for the time it takes for the patroller to detect an attacker who arrives at an arbitrary time point, and let $Z$ denote the time between the attacker's arrival time and the beginning time point of the current cycle, as shown in Figure \ref{fig1}.
Because $Z$ is uniformly distributed over $[0,c]$, to compute $E[T]$, condition on $Z$ to obtain
\begin{equation}
    \mathbb{E}[T] = \frac{1}{c} \int_0^{c} \mathbb{E}[T|Z=z] \text{ d}z = \frac{1}{c} \int_0^{c} g(z) \text{ d}z ,
\label{eq:E[T]}
\end{equation}
where we define $g(z) = E[T |Z=z]$ for convenience.

\begin{figure}[htb]
    \centering
    \includegraphics[width = 0.5\textwidth]{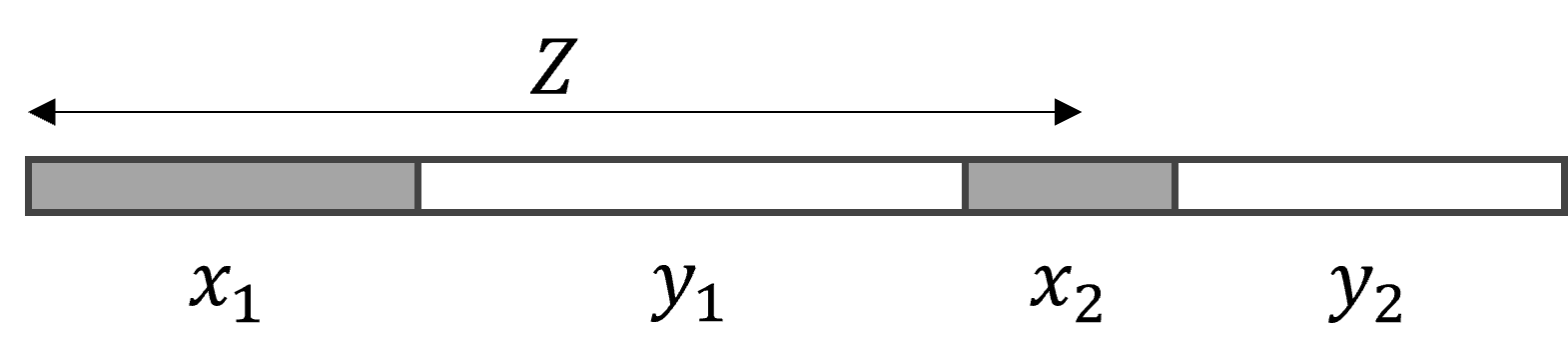}
    \caption{Cycle delineation with two searches and an attacker that arrives during the second search.}
    \label{fig1}
\end{figure}

First, consider $g(0)$, when the attacker arrives at the very beginning of the cycle.
The attacker may be detected during the $x_1$ time period, or during the $x_2$ time period, or the process restarts itself after $c$ time units, so
\begin{equation*}
g(0) = \int_0^{x_1} t \cdot \lambda e^{-\lambda t} \text{d}t + e^{-\lambda x_1} \left( x_1 + y_1 + \int_0^{x_2} t \cdot \lambda e^{-\lambda t} \text{d}t +  e^{-\lambda x_2} ( x_2 + y_2 + g(0)) \right).
\end{equation*}
Solving for $g(0)$ from the preceding yields
\begin{equation}
g(0) = \frac{1}{\lambda} +  \frac{y_1 e^{-\lambda x_1}}{1 - e^{-\lambda s}} + \frac{y_2 e^{-\lambda s}}{1 - e^{-\lambda s}} .
\label{eq:g0}
\end{equation}
The preceding equation can be understood intuitively by noting that the total amount of time the attacker stays hidden consists of three components: (i) the amount of time the patroller spends at the attacker’s location, (ii) the amount of time the patroller is away during the $y_1$ portion of a cycle, and (iii) the amount of time the patroller is away during the $y_2$ portion of a cycle.
In \eqref{eq:g0}, the first term $1/\lambda$ is the expected value of an exponential random variable with instantaneous detection rate $\lambda$, which corresponds to component (i).
The second term corresponds to component (ii).
To understand it, note that with probability $e^{-\lambda x_1}$, the attacker will stay hidden after the initial $x_1$ time units, in which case the attacker will remain hidden during the $y_1$ portion of the cycle for the first time.
Thereafter, if the attacker does not get detected during the $x_2$ portion of the cycle, and again does not get detected during the $x_1$ portion of the next cycle---which occurs with probability $e^{-\lambda (x_1 + x_2)} = e^{\lambda s}$---then the attacker will remain hidden during the $y_1$ portion of the cycle for a second time.
Repeating the argument, we see that the number of times that the attacker will experience the $y_1$ portion of the cycle follows a geometric distribution with success probability $1-e^{-\lambda s}$, which has expected value $1/ (1-e^{-\lambda s})$.
Consequently, the expected total time the attacker remains hidden during the $y_1$ portion of the cycle is $e^{-\lambda x_1} \times y_1/(1-e^{-\lambda s})$ as seen in \eqref{eq:g0}.
The third component in $\eqref{eq:g0}$ can be understood in a similar way.

If $z \in [0, x_1]$, then with a similar argument we can compute
\begin{align} 
g(z) &=  \int_0^{x_1-z} t \cdot \lambda e^{-\lambda t} \text{d}t + e^{-\lambda (x_1-z)} \left( x_1 + y_1 + \int_0^{x_2} t \cdot \lambda e^{-\lambda t} \text{d}t +  e^{-\lambda x_2} ( x_2 + y_2 + g(0)) \right) \nonumber \\
&= \frac{1}{\lambda} + \frac{ y_1 e^{-\lambda (x_1-z)}}{1 - e^{-\lambda s}} +  \frac{y_2 e^{-\lambda (s-z)}}{1 - e^{-\lambda}}.
\label{eq:g<x1}
\end{align}
If $z \in [x_1, x_1 + y_1]$, then the patroller is away for the first $x_1 + y_1 - z$ time units, so
\begin{align} 
g(z) &= (x_1 + y_1 -z) + g(x_1 + y_1) \nonumber \\
&= (x_1 + y_1 -z) + \frac{1}{\lambda} +  \frac{y_2 e^{-\lambda x_2}}{1 - e^{-\lambda s}} + \frac{y_1 e^{-\lambda s}}{1 - e^{-\lambda s}},
\label{eq:g>x1}
\end{align}
where $g(x_1+y_1)$ can be derived by swapping the subscripts 1 and 2 in \eqref{eq:g0}.
If $z \in [x_1 + y_1, x_1+y_1+x_2]$, we can derive $g(z)$ by swapping the subscripts 1 and 2 in \eqref{eq:g<x1}; if $z \in [x_1 + y_1 +x_2, c]$, we can derive $g(z)$ by swapping the subscripts 1 and 2 in \eqref{eq:g>x1}.

Putting $g(z)$, for $z \in [0, c]$, into \eqref{eq:E[T]} produces
\begin{equation} \label{eqn: average time in system}
E[T] = \frac{1}{\lambda} + \frac{1}{c}\left(
\frac{y_1 + y_2}{\lambda} + \frac{ 
y_1^2 + y_2^2 + y_1 y_2 (e^{-\lambda x_1} + e^{-\lambda x_2})
}{1-e^{-\lambda s}} - \frac{y_1^2 +y_2^2}{2}
\right),
\end{equation}
which the patroller wishes to minimize.

To minimize \eqref{eqn: average time in system}, first fix $y_1$ and $y_2$ and observe that regardless of their values, it is optimal to choose $x_1=x_2 = s/2$ to minimize $e^{-\lambda x_1} + e^{-\lambda x_2}$.
Next, after replacing $e^{-\lambda x_1} + e^{-\lambda x_2}$ with $2 e^{-\lambda s/2}$ in \eqref{eqn: average time in system}, calculus shows that this expression is minimized by choosing $y_1 = y_2 = (c-s)/2$, which completes the proof.
\end{proof}

Consider a patrol pattern that visits location $i \in [n]$ twice for possibly different lengths of time. 
Since a location appears twice in the patrol pattern, the pattern is not a simple cycle.
The next theorem shows that such a patrol pattern is outperformed by a  simple cycle.

\begin{theorem} \label{th: simple cycle two visits}
Consider a patrol pattern with which the patroller visits each location twice by traversing the same Hamiltonian cycle twice, spending $x_{i1}$ and $x_{i2}$ time units, respectively, during the two visits to location $i \in [n]$.
This patrol pattern can be improved by replacing the search time during each visit to location $i \in [n]$ with $(x_{i1} + x_{i2})/2$ time units.
In other words, any such search pattern is outperformed by a simple cycle.
\end{theorem}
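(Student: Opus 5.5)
The plan is to reduce the statement to Proposition~\ref{prop: 2 visits propersition}, applied location by location, and to observe that the proposed modification improves each location separately while keeping the cycle length fixed. Since the objective is $\max_i v_i$, it suffices to show that each $v_i$ weakly decreases.

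First fix the double-cycle pattern along a Hamiltonian cycle $(1,2,\ldots,n)$. Its length is
\[
c = \sum_{i=1}^n (x_{i1}+x_{i2}) + 2D,
\]
where $D=\sum_{i} d_{i,i+1}$ (indices taken mod $n$) is the length of the Hamiltonian tour. Let $\bar{x}_i=(x_{i1}+x_{i2})/2$. The modified pattern uses search time $\bar{x}_i$ on both visits to every location. Its total length is still $c$, because $\sum_i 2\bar{x}_i=\sum_i(x_{i1}+x_{i2})$. The modified pattern is just the simple cycle $((1,\bar x_1),\ldots,(n,\bar x_n))$ repeated twice, so it induces the same $v_i$ as that simple cycle. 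This gives the final sentence of the theorem.

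Next, from the standpoint of location $i$, the original pattern has the form $(x_{i1},y_{i1},x_{i2},y_{i2})$, with $x_{i1}+x_{i2}=s_i$ and $y_{i1}+y_{i2}=c-s_i$. Here $y_{i1}$ is the time between the end of the first visit and the start of the second. It is made up of the searches at all other locations $j$ during the first pass, together with the travel time $D$:
\[
y_{i1}=D+\sum_{j>i}x_{j1}+\sum_{j<i}x_{j2},
\]
or the analogous expression depending on where the pass starts. After averaging, $y_{i1}$ becomes
\[
D+\sum_{j\ne i}\bar{x}_j=(c-s_i)/2,
\]
and similarly $y_{i2}=(c-s_i)/2$. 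So in the modified pattern, location $i$ sees searches of length $s_i/2$ separated by gaps of length $(c-s_i)/2$. This is exactly the minimizing configuration identified in Proposition~\ref{prop: 2 visits propersition} for fixed $c$ and $s_i$. Hence $v_i$ under the modified pattern is at most $v_i$ under the original one, and this holds for every $i$. Taking the maximum over $i$ then completes the proof.

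The main obstacle is the bookkeeping in the previous step: one has to verify that the inter-visit gaps really become equal after averaging. The key observation is that each pass of the Hamiltonian cycle contributes exactly one visit to every other location plus all $n$ travel legs. Consequently each gap contains the full travel time $D$ plus one visit to each other location $j$, and after averaging every such visit has length $\bar{x}_j$. One should also note that the Proposition's minimum is attained jointly over $(x_1,x_2,y_1,y_2)$ with $c$ and $s$ fixed, so it applies even though the original gaps $y_{i1},y_{i2}$ were unequal. Finally, if some $x_{i1}=0$ in the original pattern, the formula \eqref{eqn: average time in system} still applies as a limiting case, so no separate treatment is needed.
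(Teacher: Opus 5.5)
Your proposal is correct and follows the paper's argument. You keep $c$ and each $s_i=x_{i1}+x_{i2}$ fixed, note that averaging makes both inter-visit gaps at location $i$ equal to $(c-s_i)/2$, and apply Proposition~\ref{prop: 2 visits propersition} location by location, so every $v_i$ is minimized simultaneously; your explicit computation $y_{i1}=D+\sum_{j>i}x_{j1}+\sum_{j<i}x_{j2}$ fills in the gap bookkeeping that the paper asserts without detail.
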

\begin{proof}
Write $s_i = x_{i1} + x_{i2}$ for the total search time allocated to location $i \in [n]$ in the search pattern.
Write $D$ for the travel time of the Hamiltonian cycle used in the patrol pattern, so that the length of the patrol pattern is
\[
c = \sum_{i=1}^n s_i + 2D.
\]
If the patroller spends $(x_{i1} + x_{i2})/2$ time units during each visit to location $i \in [n]$, then the time between two successive visits to location $i \in [n]$ is always $(c-s_i)/2$, which minimizes the expected time to detect an attacker at location $i \in [n]$, according to Proposition~\ref{prop: 2 visits propersition}.
Because the choice given in the theorem \textit{simultaneously} minimizes the expected time to detect an attacker at all locations, the theorem follows.
\end{proof}

It is intuitive that the result in Theorem~\ref{th: simple cycle two visits} can be extended if each location appears in a cycle an arbitrary number of times. Supporting evidence for the conjecture below can be found in \cite{mellor2024}.

\begin{conj} \label{conj: multiple visits}
%Any patrol pattern that visits a location more than once for different durations \new{is outperformed by the best simple cycle.}
Consider a patrol pattern with which the patroller visits each location $m$ times by traversing the same Hamiltonian cycle $m$ times, spending $x_{i1},x_{i2},\hdots,x_{im}$ time units, respectively, during the $m$ visits to location $i \in [n]$.
This patrol pattern can be improved by replacing the search time during each visit to location $i \in [n]$ with $(x_{i1} + x_{i2} + \hdots + x_{im})/m$ time units.
In other words, any such search pattern is outperformed by a simple cycle.
%\kyle{This long version is precise and stems naturally from Theorem 5.}
%\new{
%Any patrol pattern that visits a location more than once for different durations can be improved by replacing the duration of each search by the average of them all.}
\end{conj}

%\delete{In Appendix \ref{app: multiple visits proof} we seek to generalize the proof of Proposition \ref{prop: 2 visits propersition} to a cycle which visits location $i$, $m$ times for some $m \in \mathbb{N}$ with $m \geq 2$, in order to prove the above conjecture. For this more general case we show that the solution $x_1 = \cdots = x_m = s/m$, $y_1 = \cdots = y_m = (c-s)/m$ is a stationary point, but have thus far been unable to show that this corresponds to a unique minimum.}

\subsection{Optimizing a Simple Cycle} \label{sec: optimizing a simple cycle}
If the patroller adopts a simple cycle, and spends $x_i$ time units searching location $i$ during a cycle for $i=1,...,n$, then the time to complete a cycle is $\sum_{i=1}^n x_i$ plus the travel time to visit each location once during the cycle.
It is clear that the best travel route to follow is a minimum Hamiltonian cycle, which minimizes the travel time.
Write $D_{\min}$ for the total travel time in a minimum Hamiltonian cycle and relabel the locations such that $1 \rightarrow 2 \rightarrow \cdots \rightarrow n \rightarrow 1$ forms a minimum Hamiltonian cycle. 
A simple cycle can be delineated by $\textbf{x} = (x_1,x_2,\cdots,x_n) \in (\mathbb{R}^+)^n$ with the interpretation that the patroller spends $x_i$ time units during each visit to location $i$, for $i \in [n]$.
Write 
$$ 
c(\textbf{x}) = \sum_{j=1}^n x_j + D_{\min},
$$
which is the total time to complete the simple cycle.

For each $i \in [n]$, write $f_i(\textbf{x})$ for the expected time for a patroller to discover an attacker at location $i$ for the simple cycle $\textbf{x}$.
The patroller wants to choose a simple cycle $\textbf{x}$ to minimize the maximal expected time to find an attacker regardless of which location the attacker attacks.
Therefore, the objective is to determine
\begin{equation} \label{eqn: heterogeneous argmin}
    (x_1^*,\cdots,x_n^*) = \argmin_{(x_1,\cdots,x_n) \in (\mathbb{R}^+)^n} \left( \max_{i \in [n]} \{ f_i(\textbf{x}) \} \right)
\end{equation}

To compute $f_i(\textbf{x})$, we can set $x_2=y_2=0$ in \eqref{eqn: average time in system} to obtain the expected time to detect an attacker if the patroller uses a simple cycle.
After attaching the subscript $i$ for location $i \in [n]$, we have
\begin{equation} \label{simple cycle formula}
    f_i(\textbf{x}) = \mathbb{E}[T] =  \frac{1}{\lambda_i} + \frac{c(\textbf{x})-x_i}{c(\textbf{x})} \left( \frac{1}{\lambda_i} - \frac{c(\textbf{x})-x_i}{2} + \frac{c(\textbf{x})-x_i}{1-e^{-\lambda_i x_i}} \right).
\end{equation}

The first component in (\ref{simple cycle formula}), $1/\lambda_i$, corresponds to the expected amount of time the patroller spends at location $i$ while the attacker is there, and the second component corresponds to the expected amount of time the patroller is \textit{away} from location $i$ during the attack.
It is straightforward to verify that $f_i(\mathbf{x})$ decreases in $x_i$ and increases in $x_j$ for all $j \neq i$.
In other words, allocating more search time to location $i \in [n]$ in a simple cycle improves its performance at location $i$, while allocating more search time to any location other than $i$ degrades its performance at location $i$.

Recall from \eqref{eqn: heterogeneous argmin} that the objective of the patroller is to minimize $\max_{i \in [n]} f_i(\mathbf{x})$.
Instead of solving the optimization problem in \eqref{eqn: heterogeneous argmin} directly, first consider a variation by imposing an additional constraint
\[
\sum_{i=1}^n x_i = s.
\]
In other words, we fix the total search time available for the patroller to allocate among the $n$ locations in a simple cycle to $s$ and fix the cycle time to $s + D_{\min}$.
With this constraint, the problem can be viewed as a resource allocation problem, in which the patroller decides how to best allocate the total resource $s$ among the $n$ locations.
For each location $i=1,\ldots,n$, increasing $x_i$ reduces $f_i(\mathbf{x})$.
The problem is to determine the allocation $\mathbf{x}$ such that
\begin{equation}
f_1(\mathbf{x}) = f_2(\mathbf{x}) = \cdots = f_n(\mathbf{x}).
\label{eq:f1=f2}
\end{equation}

\begin{lem}
\label{le:sum=s}
Consider the optimization problem in \eqref{eqn: heterogeneous argmin} with an additional constraint $\sum_{i=1}^n x_i = s$, and let $\mathbf{x}' = (x_1',\ldots, x_n')$ denote its optimal solution such that
\[
v = f_1(\mathbf{x}') = f_2(\mathbf{x}') = \cdots = f_n(\mathbf{x}').
\]
If for some allocation $\mathbf{x}$ we have 
\begin{equation}
f_i(\mathbf{x}) = \max_k f_k(\mathbf{x}) >  \min_k f_k(\mathbf{x}) = f_j(\mathbf{x}),
\label{eq:max>min}
\end{equation}
then $x_i' > x_i$ and $x_j' < x_j$.
\end{lem}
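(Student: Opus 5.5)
The plan rests on one structural observation. Under the constraint $\sum_{k=1}^n x_k = s$, the cycle length $c(\mathbf{x}) = s + D_{\min}$ is the same fixed constant $c$ for every feasible allocation. By \eqref{simple cycle formula}, each $f_k(\mathbf{x})$ then depends on $\mathbf{x}$ only through $x_k$. So I will write $f_k(\mathbf{x}) = F_k(x_k)$, where
\[
F_k(x) = \frac{1}{\lambda_k} + \frac{c-x}{c}\left(\frac{1}{\lambda_k} + (c-x)\left(\frac{1}{1-e^{-\lambda_k x}} - \frac12\right)\right), \qquad x \in (0,s].
\]

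\textbf{Step 1: each $F_k$ is strictly decreasing.} I first show this on $(0,s]$, keeping $c$ fixed. The remark after \eqref{simple cycle formula} is not quite what I need, because there $c$ varies with $x_i$. The direct check is short:
\begin{itemize}
\item $(c-x)/c$ is positive and strictly decreasing in $x$, since $x \le s < c$ because $D_{\min}>0$;
\item $1/(1-e^{-\lambda_k x}) - 1/2$ is positive and strictly decreasing;
\item $c-x$ is positive and strictly decreasing.
\end{itemize}
Hence the inner bracket is positive and strictly decreasing, and the product of two positive strictly decreasing functions is strictly decreasing. This is the only computational point, and it is routine.

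\textbf{Step 2: show $f_i(\mathbf{x}) > v$.} I argue by contradiction using the budget constraint. Suppose $f_i(\mathbf{x}) \le v$. Since $f_i(\mathbf{x})$ is the maximum, $F_k(x_k) \le v = F_k(x_k')$ for every $k$. Strict monotonicity then gives $x_k \ge x_k'$ for every $k$. Moreover, \eqref{eq:max>min} gives $F_j(x_j) < F_i(x_i) \le v = F_j(x_j')$, so $x_j > x_j'$. Summing over $k$ yields $\sum_k x_k > \sum_k x_k' = s$, which contradicts $\sum_k x_k = s$. Therefore $F_i(x_i) > v = F_i(x_i')$, and strict decrease of $F_i$ gives $x_i < x_i'$.

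\textbf{Step 3: show $f_j(\mathbf{x}) < v$.} The argument is symmetric. If $f_j(\mathbf{x}) \ge v$, then every $F_k(x_k) \ge v$, so $x_k \le x_k'$ for all $k$. In addition $F_i(x_i) > F_j(x_j) \ge v$ forces $x_i < x_i'$, so $\sum_k x_k < s$, again a contradiction. Thus $F_j(x_j) < v = F_j(x_j')$, which gives $x_j > x_j'$.

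I expect no real obstacle. The crux is recognising that fixing $s$ fixes $c$, which decouples the $f_k$ into one-variable strictly decreasing functions; the rest is the pigeonhole-type budget argument above. I will also note that the lemma's hypotheses only require that $\mathbf{x}'$ equalises all $f_k$ and satisfies the budget; its optimality is not needed for this argument.
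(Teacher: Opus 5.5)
Your proof is correct. It reaches the key inequalities $f_i(\mathbf{x}) > v > f_j(\mathbf{x})$ by a different route from the paper.

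\textbf{The paper's route.} The paper argues from the optimality of $\mathbf{x}'$. Because one can shift search time from $x_j$ to $x_i$ and lower the maximum, $\mathbf{x}$ is not optimal, so $\max_k f_k(\mathbf{x}) > v$. It states $v > \min_k f_k(\mathbf{x})$ in the same breath. It then concludes $x_i' > x_i$ from monotonicity of $f_i$ and leaves the $x_j$ half to ``a similar argument''.

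\textbf{Your route.} You make explicit that the budget $\sum_k x_k = s$ freezes $c$, so each $f_k$ becomes a strictly decreasing function $F_k$ of $x_k$ alone. You then use the budget a second time, as a counting argument that gives both strict inequalities symmetrically.

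\textbf{What your route buys.}
\begin{itemize}
\item It uses only that $\mathbf{x}'$ equalises the $f_k$ and satisfies the budget, not that it is optimal.
\item It is immune to ties for the maximum. As literally stated, the paper's perturbation raises only $x_i$, which does not lower the maximum if another index also attains it.
\item It actually justifies the lower inequality $v > \min_k f_k(\mathbf{x})$. Optimality of $\mathbf{x}'$ says nothing about the minimum, and this is exactly where your budget argument does the work.
\item It supplies the fixed-$c$ monotonicity that the paper's final step tacitly relies on. The remark after \eqref{simple cycle formula} concerns $c$ varying with $x_i$, as you note.
\end{itemize}

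As a by-product, the same counting shows that every feasible $\mathbf{x} \neq \mathbf{x}'$ has $\max_k f_k(\mathbf{x}) > v$. An equalising allocation is therefore automatically the unique constrained optimum.

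The paper's version is shorter and conveys the intuition behind Algorithm~\ref{al:iteration}, but it is closer to a sketch than to a proof.
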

\noindent
\textit{Proof.}
If \eqref{eq:max>min} holds, then $\max_k f_k(\mathbf{x}) > v > \min_k f_k(\mathbf{x})$ because we can decrease $x_j$ and increase $x_i$ to decrease $\max_k f_k (\mathbf{x})$.
Because $f_i(\mathbf{x}) > v = f_i(\mathbf{x}')$, it follows that $x_i' > x_i$.
The part $x_j' < x_j$ follows with a similar argument.
\hfill $\Box$

\bigskip

For any feasible solution $\mathbf{x}$ such that $\sum_{i=1}^n x_i = s$, we can compute $f_i(\mathbf{x})$, for $i=1,\ldots,n$, to obtain a lower bound for one variable and an upper bound for another.
This observation suggests that we can compute $\mathbf{x}'$ iteratively by improving the bounds for $x_i$, for $i \in [n]$, with the algorithm below.

%Below we present an algorithm to compute the optimal solution to \eqref{eqn: heterogeneous argmin} with an additional constraint $\sum_{i=1}^n x_i = s$.

\begin{alg}
\label{al:iteration}
Compute bounds for the optimal solution in \eqref{eqn: heterogeneous argmin} with additional constraint $\sum_i x_i = s$ until the bounds are within desired accuracy.
\begin{enumerate}
\item
Set $L_i=0$ and $U_i= s$ for $i=1,\ldots,n$.
These are the lower bounds and upper bounds for $x_i$, $i=1,\ldots,n$.
\item
For $i=1,\ldots,n$, set
\[
x_i = \frac{(U_i+L_i)/2}{\sum_{k=1}^n (U_k + L_k)/2} \times s
\]
and compute $f_i(\mathbf{x})$ using \eqref{simple cycle formula}.
\item
If $\max f_k(\mathbf{x}) / \min f_k(\mathbf{x}) -1 < \epsilon$, where $\epsilon$ is a predetermined threshold, then go to step 4.
Otherwise, determine
\[
i = \arg \max f_k(\mathbf{x}) \qquad \text{and} \qquad j = \arg \min f_k(\mathbf{x})
\]
and update $L_i \leftarrow x_i$ and $U_j \leftarrow x_j$; then go to step 2.
\item Output $\mathbf{x}$ as the optimal solution.
\end{enumerate}
\end{alg}

Let $h(s)$ denote the optimal value for the optimization problem in \eqref{eqn: heterogeneous argmin} with the additional constraint $\sum_{i=1}^n x_i = s$.
If $s$ is too small, then the patroller spends too much time traveling between locations and too little time searching.
As $s$ increases from 0, the patroller's performance improves as $h(s)$ decreases.
However, as $s$ becomes too large, then the gap between visits to each location becomes too long and the performance degrades.
This observation explains intuitively why $h(s)$ is a unimodal function, which is supported by extensive computational evidence.
Consequently, we can use a ternary search algorithm to compute the best simple cycle. We describe this procedure in Section \ref{sec: Numerical Demonstration}.

%\kyle{Although it is intuitive that $h(s)$ is unimodal in $s$,  we do not have a proof.  In all numerical examples, $h(s)$ appears to be unimodal, and we can use the ternary search to find the optimal solution.  If we cannot prove $h(s)$ is unimodal, then what is the best way to justify the ternary search in this paper?}

\subsection{A Simple Cycle for Homogeneous Locations} \label{sec: a simple cycle for homogeneous locations}
In the special case in which $\lambda_i = \lambda$ for all $i \in [n]$, then the best simple cycle must have the form $x_1 = x_2 = \cdots = x_n$, for otherwise \eqref{eq:f1=f2} would not hold.
By writing $x_i = x$ for all $i \in [n]$, the objection function in \eqref{simple cycle formula} becomes
\begin{equation} \label{eqn: homogeneous simple cycle}
    \frac{D + nx-x}{D + nx} \left( \frac{1}{\lambda} - \frac{D + nx-x}{2} + \frac{D + nx-x}{1-e^{-\lambda x}} \right).
\end{equation}
Calculus shows that the preceding is a convex function in $x$ (see Appendix~\ref{app: unimodality appendix} for details), so it is straightforward to compute the optimal solution that minimizes it.

Table \ref{table: homogeneous} shows the best search duration, obtained numerically and rounded to three decimal places, for several homogeneous  patrol problems with $n=2$ locations, with a common travel time $d$ between the two locations.
We can see that as the common detection rate $\lambda$ increases the best search duration decreases.
This observation makes sense, as each location can be searched for less time while still achieving the same probability of discovering an attacker that is already there.
We can also see that as $d$ increases the best search duration also increases.
This observation also makes sense as if the time between consecutive visits to each location increases then the searcher is more incentivized to stay at the location for longer in order to reduce the probability that an attacker is overlooked.
It is also worth noting that despite the optimal search duration increasing with travel time, the percentage of time spent searching each location actually falls.
For $\lambda = 1$ the patroller spends approximately $36.5\%$ of their time searching each location when $d = 1$, $31.2\%$ of their time searching each location when $d=2$ and $27.7\%$ of their time searching each location when $d=3$.

\begin{table}[htb] 
\centering
\begin{tabular}{llll}
\cline{1-4}
 & $d =1$ & $d =2$ & $d =3$ \\ \hline
\multicolumn{1}{l}{$\lambda = 1$} & 2.709 & 3.323 & 3.731 \\ 
\multicolumn{1}{l}{$\lambda = 2$} & 1.661 & 2.021 & 2.257 \\ 
\multicolumn{1}{l}{$\lambda = 3$} & 1.243 & 1.505 & 1.674 \\ \hline
\end{tabular}
\caption{Best search duration for several 2-location patrol problems.} \label{table: homogeneous}
\end{table}

\section{A Sweep Cycle} \label{sec: a sweep cycle}
Whereas a simple cycle makes intuitive sense for many patrol problems, it may not work well when the locations are positioned along a straight line, which might occur if the patrol area is a border between two countries (for example).
Consider a set of $n$ locations labeled $1, 2, \ldots, n$ arranged as points on a border, assumed to be a line segment. 
If the patroller were to follow a simple cycle, they would start at location 1 and then move along the border to visit locations $2, 3, \ldots, n$, searching each location along the way. 
They would then travel all the way back from location $n$ to location 1 in the reverse direction without spending any time searching any of the locations $n-1, n-2, \ldots, 2$ on the return trip.
It appears wasteful to travel through these locations without searching them. This section concerns a new type of patrol pattern---a \textit{sweep cycle}---that intuitively works well for these kinds of situations.

%If the patroller splits up her search time---so that she also searches each location for some time on the return trip---then she may be able to improve the patrol effectiveness.

We say that locations $1,2, \ldots, n$ form a \textit{line network}, if for $i \neq j \in [n]$,
\[
d_{ij} = \begin{cases}
    \sum_{k=i}^{j-1} d_{k, k+1}, & i < j; \\
    \sum_{k=j+1}^{i} d_{k, k-1}, & i > j .
\end{cases}
\]
In other words, all the locations are positioned along a line segment, so the shortest path to go from location $i$ to location $j$ is to go through all the locations in between.
A \textit{sweep cycle} can be delineated by $\textbf{x} \in (\mathbb{R}^+)^{2n}$ as follows.
For each $i \in [n]$, $x_i$ is the duration of the first search at location $i$ in the forward trip and $x_{2n+1-i}$ is the duration of the second search at location $i$ on the return trip. 
The total time to complete a sweep cycle is therefore
\begin{equation}
c = \sum_{i=1}^{2n} x_i + \sum_{j=1}^{n-1} \left( d_{j,j+1} + d_{j+1,j} \right).
\label{eq:sweep cycle time v1}
\end{equation}
For example, if there are $n=3$ locations and the patroller follows the sweep cycle 
$$(0.5, 0.6, 0.7, 0.8, 0.9, 1),$$
then they would start by searching location 1 for 0.5 time units, then location 2 for 0.6 time units, then location 3 for 0.7 time units. They would then continue to search location 3 for a further 0.8 time units (for a total of $0.7+0.8=1.5$ time units) before traveling back to location 2 to search there for a further 0.9 time units and finally back to location 1 to search for 1 unit of time.
The cycle time is $4.5 + d_{12} + d_{23} + d_{32} + d_{21}$.

\subsection{Optimizing a Sweep Cycle} \label{sec: optimizing a sweep cycle}
Our first result for the sweep cycle shows that for each $i \in [n]$, the best sweep pattern searches location $i$ for the same amount of time during both visits. 

\begin{theorem} \label{Sweep Proof}
Consider a patrol problem with $n \in \mathbb{N}$ locations.
The best sweep cycle must have the form $(t_1, t_2, \ldots, t_n, t_n, \ldots, t_2, t_1)$.
In other words, the search duration at each location must be the same during the forward trip as during the return trip.
\end{theorem}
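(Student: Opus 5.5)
The plan is to reduce the claim to Proposition~\ref{prop: 2 visits propersition}, applied location by location, in the same spirit as the proof of Theorem~\ref{th: simple cycle two visits}.

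\textbf{Step 1: the single-visit endpoints.} In a sweep cycle the search durations $x_i$ and $x_{2n+1-i}$ at location $i$ enter the cycle time \eqref{eq:sweep cycle time v1} only through their sum $s_i = x_i + x_{2n+1-i}$. For the endpoints $i=1$ and $i=n$ the two searches are consecutive: at location $n$ the searches $x_n$ and $x_{n+1}$ are adjacent, and at location $1$ the searches $x_{2n}$ and $x_1$ are adjacent across the cycle boundary. Each endpoint is therefore effectively visited once per cycle for $s_i$ time units. So its expected detection time depends only on $s_i$ and $c$, through \eqref{simple cycle formula} with $x_i$ replaced by $s_i$, and not on how $s_i$ is split. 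We may then take $x_i = x_{2n+1-i} = s_i/2$ for $i\in\{1,n\}$ without loss.

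\textbf{Step 2: the interior locations.} Fix $1<i<n$. From location $i$'s standpoint the cycle is $(x_i, y_{i1}, x_{2n+1-i}, y_{i2})$. Here $y_{i1}$ is the time between leaving $i$ on the forward trip and returning to it, namely everything spent at and travelling among locations $i+1,\dots,n$. Similarly $y_{i2}$ is the time spent at and travelling among locations $1,\dots,i-1$. Proposition~\ref{prop: 2 visits propersition}, or more precisely the explicit formula \eqref{eqn: average time in system}, shows that for fixed $y_{i1},y_{i2}$ the expected detection time at $i$ is minimised by $x_i = x_{2n+1-i}=s_i/2$. The reason is that only the term $e^{-\lambda x_1}+e^{-\lambda x_2}$ depends on the split, and by convexity it is minimised at the equal split.

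\textbf{Step 3: simultaneous improvement.} Take any sweep cycle $\mathbf{x}$ and replace each pair $(x_i,x_{2n+1-i})$ by $(s_i/2,s_i/2)$. Every $s_i$ is unchanged, so the total cycle length $c$ is unchanged. The main obstacle is that the gaps $y_{i1},y_{i2}$ seen by location $i$ are built from other locations' search times. I therefore need to check that they are invariant under the symmetrisation. They are: $y_{i1}$ contains both searches of each of the locations $i+1,\dots,n$, so it equals $\sum_{k>i}s_k$ plus the travel times, and likewise $y_{i2}=\sum_{k<i}s_k$ plus the travel times. Hence for each location the replacement leaves its gaps fixed and only equalises its own split. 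By Step~2 (and Step~1 for the endpoints), each $f_i$ weakly decreases, so $\max_i f_i$ weakly decreases.

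\textbf{Step 4: strictness.} Strict inequality holds at any location with unequal split, because $e^{-\lambda x}$ is strictly convex. It follows that any optimal sweep cycle can be taken of the symmetric form $(t_1,\dots,t_n,t_n,\dots,t_1)$. A general optimal cycle must agree with this form at every location whose detection time attains the maximum, and for the others the symmetrisation is a weak improvement, which is what "must have the form" means up to ties.
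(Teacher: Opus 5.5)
Your proposal is correct and is essentially the paper's proof. You fix each location's total search time $s_i$, observe that the gaps seen by location $i$ are built only from the other totals and the travel times (so $c$ and the gaps are unchanged by symmetrization), and apply the fixed-gap form of Proposition~\ref{prop: 2 visits propersition} at every location at once; your explicit handling of the endpoints and your use of the fixed-$y$ version rather than the proposition's joint statement are, if anything, tidier than the paper.

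The only overreach is in Step~4, and the theorem as the paper proves it does not need that part. Strictness requires $y_{i1}y_{i2}>0$, so it holds only at interior locations. Your claim that every optimal cycle must be symmetric at each maximizing location also does not follow: symmetrizing leaves the maximum unchanged whenever another maximizing location is already symmetric or is an endpoint.
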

\begin{proof}
%Let $D = \sum_{i=1}^{n-1} (d_{i, i+1} + d_{i+1, i})$ denote the total travel time in a cycle.
Consider a sweep cycle in which location $i$ is allocated $s_i$ time units and we can decide how to allocate these $s_i$ time units between the two searches in the cycle.
The sweep cycle can be represented by $(x_1, x_2, \ldots, x_n, s_n -x_n, \ldots, s_2 -x_2, s_1-x_1)$ with $x_i \in [0, s_i]$, for $i \in [n]$.
From the standpoint of location $i \in [n]$, rewrite the cycle time in \eqref{eq:sweep cycle time v1} as
\[
x_i + y_{i1} + (s_i - x_i) + y_{i2},
\]
where
\[
y_{i1} = \sum_{j=1}^{i-1} (s_j +  d_{j,j+1} + d_{j+1,j} )
\]
corresponds to the total time needed to patrol locations $i-1, \ldots, 1$ in a sweep cycle, and
\[
y_{i2} = \sum_{j=i+1}^n (s_j +  d_{j-1,j} + d_{j,j-1}) 
\]
is the total time needed to patrol locations $i+1, i+2, \ldots, n$.
These two quantities $y_{i1}$ and $y_{i2}$ do not depend on $x_j$ for $j \neq i$, so the sweep cycle's performance at location $i$ depends only on $x_i$ but not on $x_j$, $j \neq i$.

According to Proposition~\ref{prop: 2 visits propersition}, setting $x_i = s_i/2$ minimizes the expected time to discover an attacker at location $i \in [n]$.
In summary, setting $x_i = s_i/2$ in a sweep cycle optimizes its performance at location $i$ and does not affect its performance at the other locations.
Consequently, setting $x_i = s_i/2$ for all $i \in [n]$ simultaneously optimizes the sweep cycle's performance at all locations, which proves the theorem.
\end{proof}

From Theorem~\ref{Sweep Proof}, to determine the best sweep cycle, we need to consider only $n$ decision variables, namely $(x_1, x_2, \ldots, x_n)$, where $x_i$ is the search duration each time the patroller searches in location $i$ during one sweep.
While the two searches for locations $2, 3, \ldots, n-1$ are separated in a sweep cycle, the two searches at locations 1 and $n$ are back to back.
Therefore, during each sweep cycle, location 1 is visited once for a search duration $2 x_1$ and location $n$ is visited once for a search duration $2 x_n$.

The total time to complete a sweep cycle is
\begin{equation}
c = \sum_{j=1}^{n} 2x_j + \sum_{j=1}^{n-1} \left( d_{j,j+1} + d_{j+1,j} \right).
\label{eq:sweep cycle time}
\end{equation}
where the first component corresponds to the total search time, and the second component corresponds to the total travel time.

To evaluate the performance of a sweep cycle, consider the standpoint from location $i \in [n]$, and rewrite the cycle time in \eqref{eq:sweep cycle time} as
\[
x_i + y_{i1} + x_i + y_{i2},
\]
where
\[
y_{i1} = \sum_{j=1}^{i-1} (2x_j +  d_{j,j+1} + d_{j+1,j} )
\]
corresponds to the total time needed to patrol locations $i-1, \ldots, 1$, and
\[
y_{i2} = \sum_{j=i+1}^n (2x_j +  d_{j-1,j} + d_{j,j-1}) 
\]
corresponds to the total time needed to patrol locations $i+1, \ldots, n$.
In particular, $y_{11} = y_{n2} = 0$.
From the standpoint of location $i$, a sweep cycle is analogous to the cycle shown in Figure~\ref{fig1}.

From \eqref{eqn: average time in system}, it follows that the expected time to discover an attacker at a location $i \in [n]$ is
\begin{equation} \label{eqn: sweep cycle}
    g_i(\textbf{x}) := \frac{1}{\lambda_i} + \frac{1}{c} \left(
\frac{y_{i1} + y_{i2}}{\lambda_i} + \frac{ 
y_{i1}^2 + y_{i2}^2 + 2y_{i1} y_{i2} e^{-\lambda x_i }  }{1-e^{-2\lambda x_i}} - \frac{y_{i1}^2 +y_{i2}^2}{2}
\right).
\end{equation}

Recall that the patroller wishes to minimize the maximal expected time to discover an attacker across all locations, so their objective is to determine
\[
(x_1^*,\cdots,x_n^*) = \argmin_{(x_1,\cdots,x_n) \in (\mathbb{R}^+)^n} \left( \max_{i \in [n]} \{ g_i(\textbf{x}) \} \right).
\]
To compute the optimal solution in the preceding, we can adopt the same procedure described in Section~\ref{sec: optimizing a simple cycle}.
That is, use Algorithm~\ref{al:iteration} to compute the optimal solution if the total search time in a sweep cycle $2 \times \sum_{i=1}^n x_i$ is fixed at $s$, and then use the ternary search to determine the optimal $s$.
We describe this procedure in Section \ref{sec: Numerical Demonstration}.

%\kyle{I believe the counterpart of Lemma~\ref{le:sum=s} holds for $g(\mathbf{x})$ but the proof is more complicated.
%Whereas $f_i(x_1, \ldots, x_j+ \delta, \ldots, x_i, \ldots, x_k-\delta, \ldots, x_n)$ remains the same for $\delta > 0$ in a simple cycle, $g_i(x_1, \ldots, x_j+ \delta, \ldots, x_i, \ldots, x_k-\delta, \ldots, x_n)$ changes as $\delta$ changes in a sweep cycle.
%To prove Lemma~\ref{le:sum=s} for $g(\mathbf{x})$, we need to show that the location that is doing the best must transfer some resource (search time) to other locations to bring up the weakest link.

%If we cannot prove this conjecture, how do we approach this issue in this paper?  The results of all numerical examples are consistent with this conjecture.}

\begin{theorem} \label{thm: sweep beats simple}
In a line network, the best sweep cycle outperforms the best simple cycle.    
\end{theorem}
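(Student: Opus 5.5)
The plan is to take the best simple cycle $\mathbf{x}=(x_1,\ldots,x_n)$ for the line network and build from it a sweep cycle with the same cycle length $c$ and the same total search time $x_i$ at each location. I will then show that this sweep cycle does at least as well at every location. The best sweep cycle is then at least as good as the best simple cycle.

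\textbf{Step 1 (travel times).} In a line network, the route $1\to 2\to\cdots\to n\to 1$ costs $\sum_{j=1}^{n-1}(d_{j,j+1}+d_{j+1,j})$. This is the minimum Hamiltonian cycle length $D_{\min}$. Indeed, any closed tour visiting all locations must cross each "gap" between $j$ and $j+1$ at least once in each direction. The line-network formula for $d_{ij}$ attributes to each such crossing at least $d_{j,j+1}$ or $d_{j+1,j}$ respectively. So the sweep route has exactly the travel time $D_{\min}$ used by the best simple cycle.

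\textbf{Step 2 (construction).} Take the sweep cycle $(x_1/2,\ldots,x_n/2,x_n/2,\ldots,x_1/2)$. Its cycle time equals $c(\mathbf{x})=\sum_j x_j+D_{\min}$. Consider location $i$:
\begin{itemize}
\item For $i\in\{1,n\}$, the two half-searches are back to back. The location therefore experiences the pattern "search $x_i$, away $c-x_i$", which is exactly what it sees in the simple cycle, so $g_i=f_i$.
\item For $1<i<n$, the location sees $(x_i/2,y_{i1},x_i/2,y_{i2})$, where $y_{i1}+y_{i2}=c-x_i$.
\end{itemize}

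\textbf{Step 3 (comparison at interior locations).} Use \eqref{eqn: average time in system} with $s=x_i$ and $Y=y_1+y_2=c-s$. The simple cycle corresponds to $y_1y_2=0$, and since $f_i$ depends on the $y$'s only through $Y$ when $y_1y_2=0$, the split is irrelevant there. Write $y_1^2+y_2^2=Y^2-2y_1y_2$. Then the bracket in \eqref{eqn: average time in system} becomes a function of $Y$ plus
\[
y_1y_2\left(1-\frac{2-e^{-\lambda x_1}-e^{-\lambda x_2}}{1-e^{-\lambda s}}\right).
\]
With $x_1=x_2=s/2$, this correction equals
\[
y_1y_2\left(1-\frac{2}{1+e^{-\lambda s/2}}\right)\le 0,
\]
with strict inequality when $y_1y_2>0$, i.e.\ at interior locations. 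Hence $g_i(\cdot)\le f_i(\mathbf{x})$ for every $i$, and so
\[
\max_i g_i\le \max_i f_i(\mathbf{x}).
\]
Minimizing over sweep cycles, which by Theorem~\ref{Sweep Proof} may be taken symmetric, then gives the theorem.

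\textbf{Main obstacle.} The delicate point is Step 3. Proposition~\ref{prop: 2 visits propersition} alone does not suffice, because the sweep's gaps $y_{i1},y_{i2}$ are generally unequal, so we cannot simply invoke optimality of the equal split. The algebraic identity above is what handles this, and I would double-check it carefully against \eqref{eqn: average time in system}. A secondary point is the interpretation of "outperforms": the inequality is only weak in general, and strict at interior locations when $n\ge3$. Strictness for the maximum would need an extra perturbation argument, which I would state as a remark rather than claim.
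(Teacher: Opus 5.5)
Your construction is the paper's. You halve every search time of the best simple cycle, keep the cycle length, note that locations $1$ and $n$ see exactly the simple-cycle pattern, and show that every interior location strictly gains. Your Step~3 is correct. Substituting $y_1^2+y_2^2=Y^2-2y_1y_2$ into \eqref{eqn: average time in system} leaves a function of $(s,Y)$ plus $y_1y_2\bigl(1-\frac{2-e^{-\lambda x_1}-e^{-\lambda x_2}}{1-e^{-\lambda s}}\bigr)$ in the bracket. At $x_1=x_2=s/2$ this correction is $-y_1y_2\frac{1-e^{-\lambda s/2}}{1+e^{-\lambda s/2}}<0$.

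This is a more careful justification than the paper gives. The paper simply asserts that $f_i(\mathbf{x})>g_i(\mathbf{y})$ for interior $i$ ``follows from Proposition~\ref{prop: 2 visits propersition}''. As you note, that proposition only identifies the equal-gap configuration as the minimiser. Comparing an unequal-gap configuration $(s/2,y_1,s/2,y_2)$ with the simple one needs exactly your identity, which is what the ``calculus shows'' step inside that proposition's proof amounts to.

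The genuine gap is that you stop at $\max_i g_i\le\max_i f_i$. ``Outperforms'' is meant strictly, and the paper proves a strict improvement for $n\ge 3$. For $n=2$ a sweep cycle is itself a simple cycle, so only equality is possible. Your construction alone cannot give more. If the largest $f_i$ sits at an endpoint, as it does at an equalised optimum where all $f_i$ coincide, then $\max_i g_i=\max_i f_i$ exactly, because $g_1=f_1$ and $g_n=f_n$.

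The missing step is the paper's perturbation, and it is short given what you have:
\begin{enumerate}
\item Choose an interior $j$ and shorten each of its two visits by $\varepsilon/2$, so the cycle length drops by $\varepsilon$.
\item At location $1$ the search time $x_1$ is unchanged and the single gap $Y=c-x_1$ shrinks. With the other gap zero, the value is $\frac{1}{\lambda_1}+\frac{Y}{x_1+Y}\bigl(\frac{1}{\lambda_1}+Y\bigl(\frac{1}{1-e^{-\lambda_1 x_1}}-\frac12\bigr)\bigr)$, which is strictly increasing in $Y$. So $g_1<f_1$ now holds strictly, and likewise $g_n<f_n$.
\item At every interior location, $j$ included, your strict inequality survives a small enough perturbation by continuity.
\end{enumerate}
Hence the perturbed sweep has $\max_i g_i<\max_i f_i$, and this does not need the maximum of the $f_i$ to sit at location $1$.

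At the interior points, rely on continuity rather than on ``a shorter cycle helps every location''. With its search time fixed, $g_i$ need not be increasing in each gap separately. For example, take $\lambda=1$, two visits of length $3$ and $y_{i2}=20$. Shrinking $y_{i1}$ from $1$ to $0.9$ raises the expected detection time from about $9.315$ to $9.331$.
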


%kyle{Outline of proof: Start with the best simple cycle and denote it by $x_1,\ldots, x_n$.  Construct a sweep cycle $y_1,\ldots,y_n, y_n, \ldots, y_1$ with $y_i = x_i/2$.
%time units for that interior location for each direction, for $i=2,\ldots,n-1$.
%This sweep cycle would produce a smaller $f_i$ for $i=2,\ldots,n-1$ and the same $f_i$ for $i=1,n$.
%Finally, adjust $y_i$ to be slightly less than $x_i/2$ for $i=2,\ldots,n-1$, which increases $f_i$, $i=2,\ldots,n-1$ but decreases $f_i$ for $i=1,n$ to complete the proof.}

\begin{proof}
Consider a line network with $n \geq 3$ locations labeled $1,2,\ldots,n$ from one end to the other.
The best simple cycle follows the minimum Hamiltonian cycle $1,2, \ldots, n, 1$. Write $\textbf{x} = ( x_1,\hdots,x_n )$ for the vector of corresponding search times at these locations.

Now consider a sweep cycle using the same ordering where, for each $i \in [n]$, each search of location $i$ is of length $y_i = x_i/2$.
Since each location is searched twice, this sweep cycle has the same total length as the length of the best simple cycle.
For locations $1$ and $n$, the two searches take place one immediately after the other so it is clear that $f_1(\textbf{x}) = g_1(\textbf{y})$ and
$f_n(\textbf{x}) = g_n(\textbf{y})$ .

For $i = 2,\hdots,n-1$, it follows from Proposition \ref{prop: 2 visits propersition} that $f_i(\textbf{x}) > g_i(\textbf{y})$ .
Since the patroller wishes to minimize the maximal time to discovery across all locations, the performance of this sweep cycle is identical to that of the best simple cycle, because
\[
\max_{i=1,\ldots,n} f_i (\mathbf{x}) = f_1(\mathbf{x}) = g_1(\mathbf{y}) = \max_{i=1,\ldots,n} g_i (\mathbf{y}).
\]
Now, arbitrarily select a location $j \in \{ 2,\hdots,n-1 \}$ and set $y_j = (x_j - \varepsilon)/2$ for some small $\varepsilon > 0$.
This change reduces the total cycle time for the sweep cycle by $2\varepsilon$.
Thus for all $i \neq j$, $f_i(\textbf{x}) > g_i(\textbf{y})$ and as long as $\varepsilon$ is chosen to be sufficiently small then $f_j(\textbf{x}) > g_j(\textbf{y})$ still holds.
Consequently, the performance of this improved sweep cycle is better than that of the best simple cycle.
\end{proof}

\subsection{A Sweep Cycle for Homogeneous Locations} \label{sec: a sweep cycle for homogeneous locations}
Consider a patrol problem where all locations have the same detection rate.
Unlike when deriving the best simple cycle, the homogeneity of detection rates does not allow us to conclude that the best duration will be the same at each location when considering a sweep cycle.
The reason that the dimension of the problem cannot be reduced in this way is that the time between searches is not consistent from location to location when following a sweep cycle.
In particular, after the first search at the location at one end of the line network, the patroller must visit all other locations twice. 
They will then conduct their second search at the first location, which immediately rolls into another search at the same location at the start of the next sweep cycle.
In contrast, the two searches at a location near the middle of the sweep will be more evenly spaced in the cycle.
It is therefore intuitive that the best search duration is longer for the first location than for one in the middle.
However, there are two locations that will always have the same search duration when detection rates are homogeneous.
These are the two locations at either end of the line network, since both locations receive two consecutive searches in a sweep cycle.
This observation allows us to reduce the number of decision variables by one, which makes finding the best sweep cycle easier.
It is worth noting that while the detection rates should be the same for these two locations, they do not need to be the same everywhere else.

The number of decision variables can be further reduced if certain other conditions are met.
Suppose that, in addition to detection rates being homogeneous for all locations, the travel time between each pair of adjacent locations along the line are also the same. 
In this case, the number of decision variables can be reduced to $\lceil n / 2 \rceil$, where $\lceil\cdot\rceil$ denotes the ceiling function.
This condition is met when all locations are equally spread out along a line network.
In this somewhat more restrictive case, the best search duration for any particular location $i\leq \lfloor n / 2 \rfloor$---where $\lfloor\cdot\rfloor$ denotes the floor function---must be the same as the best search duration for location $n+1-i$. 
If $n$ is odd then there is one location where $i=n+1-i$, so this location is paired with itself.
This is because the time that the patroller spends away from location $i$ after the first search at location $i$ is the same as the time that the patroller spends away from location $n+1-i$ after the second search of location $n+1-i$, and vice versa.

\section{Numerical Demonstration} \label{sec: Numerical Demonstration}
This section demonstrates our findings in this paper numerically.
In Section~\ref{sec: ternary search}, we introduce a ternary search method that can be used to compute the best simple cycle and the best sweep cycle.
In Section \ref{sec: examples}, we present several examples and compare the performance of simple cycles and that of sweep cycles.
We also give a few examples where simple cycles and sweep cycles are outperformed by another patrol pattern that takes advantage of the structure of the network.

\subsection{Ternary Search} \label{sec: ternary search}
Consider a unimodal function $h(\cdot)$ defined on the interval $[a,b]$, which achieves its minimum at $x^* \in (a, b)$.
That is, if $a \leq x < x' < x^*$, then $h(x) \geq h(x')$, and if $x^* < x < x' \leq b$, then $h(x) \leq h(x')$.
To compute $x^*$ within a chosen tolerance $\varepsilon > 0$, we can use a ternary search method as follows:
\begin{enumerate}
    \item Initialize $a$ and $b$ as the lower and upper bounds for the decision variable.
    \item Let $l = (2a + b)/3$ and $r = (a +2b)/3$.
    \item Evaluate $h(l)$ and $h(r)$.
    \item If $h(l) \geq h(r)$, then the minimum cannot lie in the interval $[a, l]$, so we update $a \leftarrow l$.
    Similarly, if $h(l) \leq h(r)$, then the minimum lie in the interval $[u, b]$, so we update $b \leftarrow r$.
    \item If $(b - a) / 2 < \varepsilon$, return the approximate optimal value $(a + b)/2$. Otherwise, return to Step 2.
\end{enumerate}

We wish to use the ternary search method to find the best patrol pattern of a given cycle type.
If we want to find the best simple cycle for a search problem with homogeneous locations---all locations having the same detection rate---then the objective function is given in \eqref{eqn: homogeneous simple cycle}.
If we want to find the best simple cycle or the best sweep cycle for a search problem in which each location may have a different detection rate, then the objective function is $h(s)$ given at the end of Section~\ref{sec: optimizing a simple cycle}.
In all these cases, the objective function is defined on $(0, \infty)$, so to carry out the ternary search method, we need to first determine an interval $[a,b]$ that contains the optimal solution.

%To find $\textbf{x}^*$, the vector of best search durations, for a simple cycle we needed to minimize the maximum of $f_i(\sigma,\textbf{x})$ over $i \in [n]$ where $f_i(\sigma,\textbf{x})$ is given by \eqref{eqn: simple cycle}.
%Similarly, for a sweep cycle, we need to minimize the maximum of $f_i(\sigma_\leftrightarrow,\textbf{x})$ over $i \in [n]$ where $f_i(\sigma_\leftrightarrow,\textbf{x})$ is given by \eqref{eqn: sweep cycle}.

If we can find three numbers $a < c < b$ such that $h(a) > h(c)$ and $h(c) < h(b)$, then we know that the optimal solution that minimizes $h(x)$ must lie in the interval $[a,b]$.
To find three such points, we can start with three arbitrary numbers $a < c < b$, and then keep decreasing $a$ and increasing $b$ as needed, until we have the desired inequality.
The algorithm proceeds as follows:
\begin{enumerate}
    \item Initialize $a=1$, $c = 2$, $b=3$.
    \item Evaluate $h(a)$, $h(c)$, and $h(b)$. 
    \item If $h(a) > h(c)$ and $h(b) > h(c)$, then return $[a,b]$.     
    \item Otherwise, if $h(a) \leq h(c)$, then update $c \leftarrow a$ and $a \leftarrow a/2$; if $h(b) \leq h(c)$, then update $c \leftarrow b$ and $b \leftarrow 2 b$. Return to Step 2.
\end{enumerate}

%To find the best simple cycle for an n-location patrol problem with heterogeneous locations we can use the following approach.
%Ternary search is used to find the value of $s$ that minimizes the expected time to discover the attacker during the best simple cycle with the total search time of $s$. Each time ternary search needs to evaluate the this best simple cycle subject to the constraint we call call Algorithm \ref{al:iteration}.

\subsection{Examples} \label{sec: examples}
In this section, we examine some specific three and four location patrol problems.
The three location scenarios are shown in Figures \ref{fig: patrol examples}, \ref{fig: line examples}, \ref{fig: long line example} and \ref{fig: streched examples} while four location scenarios are shown in Figure \ref{fig: rectangular cases}.
In each of these figures, the patrol problems are depicted as graphs with each node representing a location and each arc representing a route between two locations.
In each scenario, the location number is given inside the node and the detection rate for that location is noted beside the node.
The time to travel between two locations is displayed next to the corresponding arc.
For simplicity, in all of these scenarios the travel times between each pair of distinct locations are symmetric and so for all $i, j \in [n]$ we have $d_{ij}=d_{ji}$.
Note that in the four location problems, where there is no arc between two nodes it should be understood that there is no direct route between the corresponding locations.
Thus the travel time between these locations is equal to the shortest indirect route between these locations.
For example, in scenarios (d), (e) and (f), shown in Figure \ref{fig: line examples}, there is no direct route between locations 1 and 3 so $d_{13} = d_{12} + d_{23} = 2$.  

\begin{exmp} \label{expl: a b and c}
Scenarios (a), (b) and (c), given in Figure \ref{fig: patrol examples} are each geographically symmetric, meaning that all travel times are set to $d$ for some $d \in \mathbb{R}^+$. 
In each of these scenarios, $d=1$.
Scenarios (a) and (b) also have homogeneous detection rates with $\lambda=1$ for all locations in scenario (a) and $\lambda=2$ for all locations in scenario (b).

\begin{figure}[H]
    \centering
    \includegraphics[width = 0.8\textwidth]{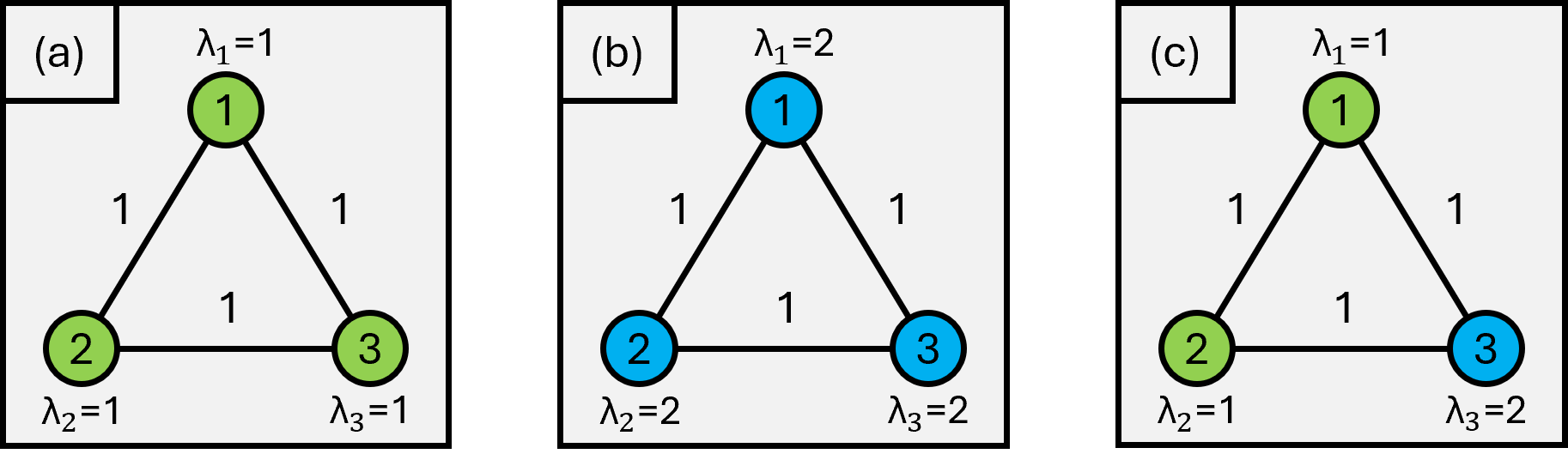}
    \caption{Three location examples with unit travel times}
    \label{fig: patrol examples}
\end{figure}

For scenarios (a) and (b), the best simple cycle can be found using the ternary search algorithm as described in Section \ref{sec: ternary search}.
For scenario (a), the best simple cycle searches each location for $x^* = 2.230$ time units which yields an expected time to discovery of $f(\sigma,\textbf{x}^*) = 5.333$.
For scenario (b), the best simple cycle searches each location for $x^* = 1.361$ time units which yields an expected time to discovery of $f(\sigma,\textbf{x}^*) =3.540$.
Since the only difference between these two scenarios is that the homogeneous detection rate is higher for scenario (b), the expected time to discover each attacker in (b), even under the same policy, must be lower. 
Additionally, it makes sense that this policy could be further improved by reducing the amount of time spent searching each location, thereby allowing the patroller to get back to the other locations more quickly.

Scenario (c) has heterogeneous detection rates with $\lambda_1 = \lambda_2 = 1$ and $\lambda_3 = 2$.
The best simple cycle for this problem can be computed using ternary search and Algorithm \ref{al:iteration} as described in Section \ref{sec: ternary search}.
The search duration for the two locations with the lower detection rate is $x_1^*=x_2^*=2.304$ and the duration for the other location is $x_3^*=1.284$.
These values yield an expected time to discover an attacker of $f(\sigma,\textbf{x}^*) = 4.723$.
This result lies between the expected time to discover an attacker in scenarios (a) and (b), because two of the detection rates are the same as in scenario (a) and the other is the same as in scenario (b).

The best sweep cycle can also be computed for each of these scenarios.
These are not the sorts of problems where we would expect a sweep pattern to perform well and indeed we can see from Table \ref{tbl: Simple Cycle and Sweep Cycle Results} that the best sweep cycle performs worse than the best simple cycle for all three scenarios.
For scenario (c), the order in which the sweep pattern should visit the locations is not immediately obvious.
The results presented in Table \ref{tbl: Simple Cycle and Sweep Cycle Results} are for when location 3 is visited last. We would expect the same result if location 3 is visited first.
If location 3 was visited second the sweep cycle performs slightly worse with an expected time to discover an attacker of $f(\sigma_\leftrightarrow,\textbf{x}) = 5.056$.
\hfill $\blacktriangle$
\end{exmp}

The best simple and sweep cycles for each of the three location scenarios are shown in Table \ref{tbl: Simple Cycle and Sweep Cycle Results}.
Each of these results were calculated using ternary search or nested ternary search. The tolerance $\varepsilon$ in all cases was set to $10^{-8}$.
Throughout this section all numerical results are rounded to three decimal places.
%\hl{The first three scenarios, labelled (a), (b) and (c), are discussed in Example \ref{expl: a b and c}.
%The next three scenarios, labelled (d), (e) and (f) are discussed In Example \ref{expl: d e and f}.
%The scenarios labelled (g) and (h) are discussed in \ref{expl: g} and \ref{expl: h} respectively.
%Finally the two four location scenarios, labelled (i) and (j), are discussed in Example \ref{expl: i and j}.}
%\kyle{At this point, the readers will not keep track of which scenario belongs to which example.  The foreshadowing adds nothing useful and slows down the flow.}

\begin{table}[H]
\centering
\begin{tabular}{c|cccc|cccc|}
\cline{2-9}
                           & \multicolumn{4}{c|}{Best Simple Cycle}                                                                             & \multicolumn{4}{c|}{Best Sweep Cycle}                                                                                              \\ \hline
\multicolumn{1}{|c|}{Scenario} & \multicolumn{1}{c|}{$x_1^*$} & \multicolumn{1}{c|}{$x_2^*$} & \multicolumn{1}{c|}{$x_3^*$} & $f(\sigma,\textbf{x}^*)$ & \multicolumn{1}{c|}{$x_1^*$} & \multicolumn{1}{c|}{$x_2^*$} & \multicolumn{1}{c|}{$x_3^*$} & $f(\sigma_\leftrightarrow,\textbf{x}^*)$ \\ \hline
\multicolumn{1}{|c|}{(a)}  & \multicolumn{1}{c|}{2.230} & \multicolumn{1}{c|}{2.230} & \multicolumn{1}{c|}{2.230} & 5.333                  & \multicolumn{1}{c|}{1.415} & \multicolumn{1}{c|}{1.124} & \multicolumn{1}{c|}{1.415} & 5.657                                  \\ \hline
\multicolumn{1}{|c|}{(b)}  & \multicolumn{1}{c|}{1.361} & \multicolumn{1}{c|}{1.361} & \multicolumn{1}{c|}{1.361} & 3.540                  & \multicolumn{1}{c|}{0.883} & \multicolumn{1}{c|}{0.620} & \multicolumn{1}{c|}{0.883} & 3.865                                  \\ \hline
\multicolumn{1}{|c|}{(c)}  & \multicolumn{1}{c|}{2.304} & \multicolumn{1}{c|}{2.304} & \multicolumn{1}{c|}{1.284} & 4.723                  & \multicolumn{1}{c|}{1.406} & \multicolumn{1}{c|}{1.139} & \multicolumn{1}{c|}{0.834} & 5.047                                  \\ \hline
\multicolumn{1}{|c|}{(d)}  & \multicolumn{1}{c|}{2.425} & \multicolumn{1}{c|}{2.425} & \multicolumn{1}{c|}{2.425} & 5.932                  & \multicolumn{1}{c|}{1.415} & \multicolumn{1}{c|}{1.124} & \multicolumn{1}{c|}{1.415} & 5.657                                  \\ \hline
\multicolumn{1}{|c|}{(e)}  & \multicolumn{1}{c|}{1.473} & \multicolumn{1}{c|}{1.473} & \multicolumn{1}{c|}{1.473} & 4.096                  & \multicolumn{1}{c|}{0.883} & \multicolumn{1}{c|}{0.670} & \multicolumn{1}{c|}{0.883} & 3.865                                  \\ \hline
\multicolumn{1}{|c|}{(f)}  & \multicolumn{1}{c|}{2.488} & \multicolumn{1}{c|}{2.488} & \multicolumn{1}{c|}{1.408} & 5.306                  & \multicolumn{1}{c|}{1.406} & \multicolumn{1}{c|}{1.139} & \multicolumn{1}{c|}{0.834} & 5.048                                  \\ \hline
\multicolumn{1}{|c|}{(g)}  & \multicolumn{1}{c|}{3.742} & \multicolumn{1}{c|}{3.742} & \multicolumn{1}{c|}{3.347} & 14.667                 & \multicolumn{1}{c|}{2.296} & \multicolumn{1}{c|}{1.552} & \multicolumn{1}{c|}{2.296} & 14.081                                 \\ \hline
\multicolumn{1}{|c|}{(h)}  & \multicolumn{1}{c|}{3.778} & \multicolumn{1}{c|}{3.778} & \multicolumn{1}{c|}{3.778} & 15.083                 & \multicolumn{1}{c|}{2.373} & \multicolumn{1}{c|}{1.471} & \multicolumn{1}{c|}{2.373} & 14.884                                 \\ \hline
\end{tabular}
\caption{Three location results for simple cycles and sweep cycles}
\label{tbl: Simple Cycle and Sweep Cycle Results}
\end{table}

\begin{exmp} \label{expl: d e and f}
Scenarios (d), (e) and (f) given in Figure \ref{fig: line examples} each show a line of three evenly spaced locations with $d_{12} = d_{23} = 1$ and $d_{13} = 2$.
Scenarios (d) and (e) have homogeneous detection rates with $\lambda = 1$ for all locations in scenario (d) and $\lambda = 2$ for all locations in scenario (e).
The best simple cycle for each of these scenarios can be found using the ternary search algorithm described in Section \ref{sec: ternary search}.
Scenario (f) has detection rates of $\lambda_1 = \lambda_2 = 1$ and $\lambda_3=2$.
We can find the best simple cycle for scenario (f) and the best sweep cycle for scenarios (d), (e) and (f).

\begin{figure}[H]
    \centering
    \includegraphics[width = 0.8\textwidth]{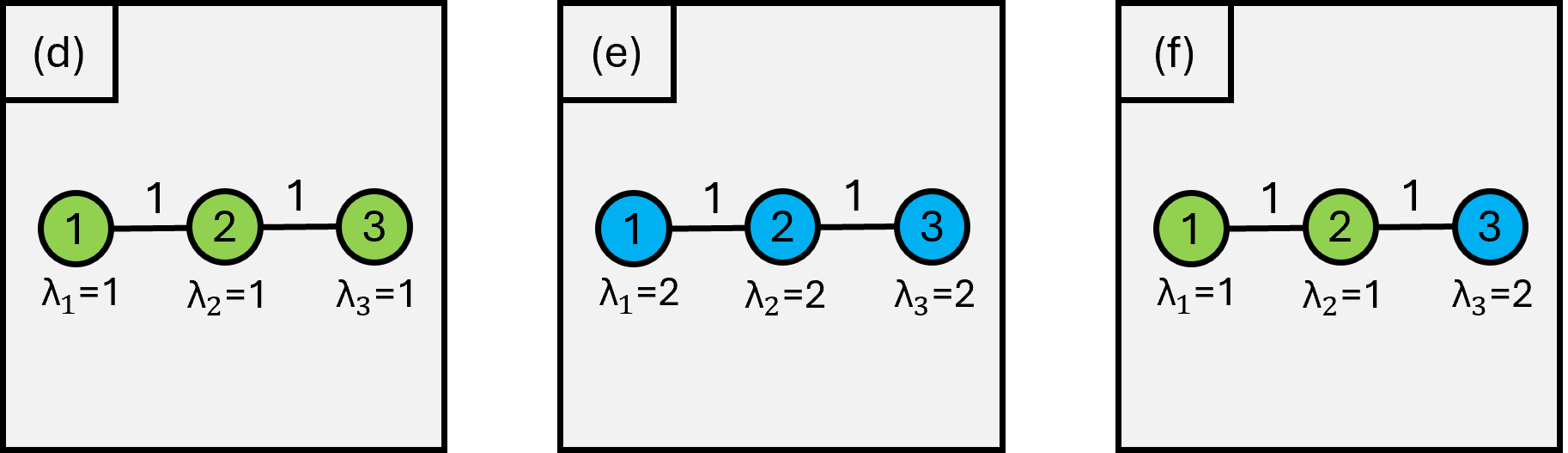}
    \caption{Three location examples with points equally spaced along a line}
    \label{fig: line examples}
\end{figure}

For scenario (d), the search durations for the best simple cycle are all $x^*=2.425$, which yields an expected time to discover each attacker of $f(\sigma,\textbf{x}^*) = 5. 932$. 
The search durations for the best sweep are $x_1^*=x_3^*=1.415$ for locations 1 and 3 and $x_2^* = 1.124$ for location 2. These values yield an expected time to discover each attacker of $f(\sigma_\leftrightarrow,\textbf{x}^*) = 5.657$. 
During the sweep pattern, the patroller spends a higher proportion of their time at locations 1 and 3.
This observation makes sense, since both of these locations have long gaps without visitation during the cycle.
Sweeping also spends a lower proportion of time travelling between locations, leading to a lower expected time to discover each attacker. 

For scenario (e), the search durations for the best simple cycle are all $x^* = 1.473$, which yields an expected time to discover each attacker of $f(\sigma,\textbf{x}^*) = 4.096$.
The search durations for the best sweep are $x_1^*=x_3^*=0.883$ for locations 1 and 3 and $x_2^* = 0.670$ for location 2. These values yield an expected time to discover each attacker of $f(\sigma_\leftrightarrow,\textbf{x}^*) = 3.865$.

For scenario (f), the search durations for the best simple cycle are all $x^* = 2.488$, which yields an expected time to discover each attacker of $f(\sigma,\textbf{x}^*) = 5.306$.
The search durations for the best sweep are $x_1^*= 1.406$,  $x_2^* = 1.139$ and $x_3^* = 0.834$. These values yield an expected time to discover each attacker of $f(\sigma_\leftrightarrow,\textbf{x}^*) = 5.048$.
As in scenario (d), the best sweep cycle also performs better for both of these scenarios.
\hfill $\blacktriangle$
\end{exmp}

So far, the scenarios we have discussed have been chosen because they have characteristics that make either simple cycles or sweep cycles perform well.
We use the term \emph{ring network} to refer to a set of points arranged in a circuit, where it is only possible to move directly to adjacent locations.
Thus for each $i, j \in [n]$ with $i < j$, the time taken to move from $i$ to $j$ is
$$ \min \left( \sum_{k=i}^{j-1} d_{k,k+1}, \sum_{k=1}^{n-1} d_{k,k+1} + d_{n,1} - \sum_{k=i}^{j-1} d_{k,k+1} \right). $$ 
Scenarios (a), (b) and (c) are examples of ring networks which we would expect a simple cycle to perform particularly well for.
Scenarios (d), (e) and (f) are examples of line networks which we would expect a sweep cycle to perform particularly well for, as discussed in Section \ref{sec: a sweep cycle}.

However, a simple cycle is not necessarily optimal on a ring network, and a sweep cycle is not necessarily optimal on a line network, as demonstrated in the next few scenarios. 

\begin{exmp} \label{expl: g}

In general, to show that a particular cycle type is not optimal, we first need to find the best cycle of the given type and then find at least one patrol pattern that performs better.
Consider scenario (g), in which location 1 is far away from the other two locations.

\begin{figure}[H]
    \centering
    \includegraphics[width = 0.8\textwidth]{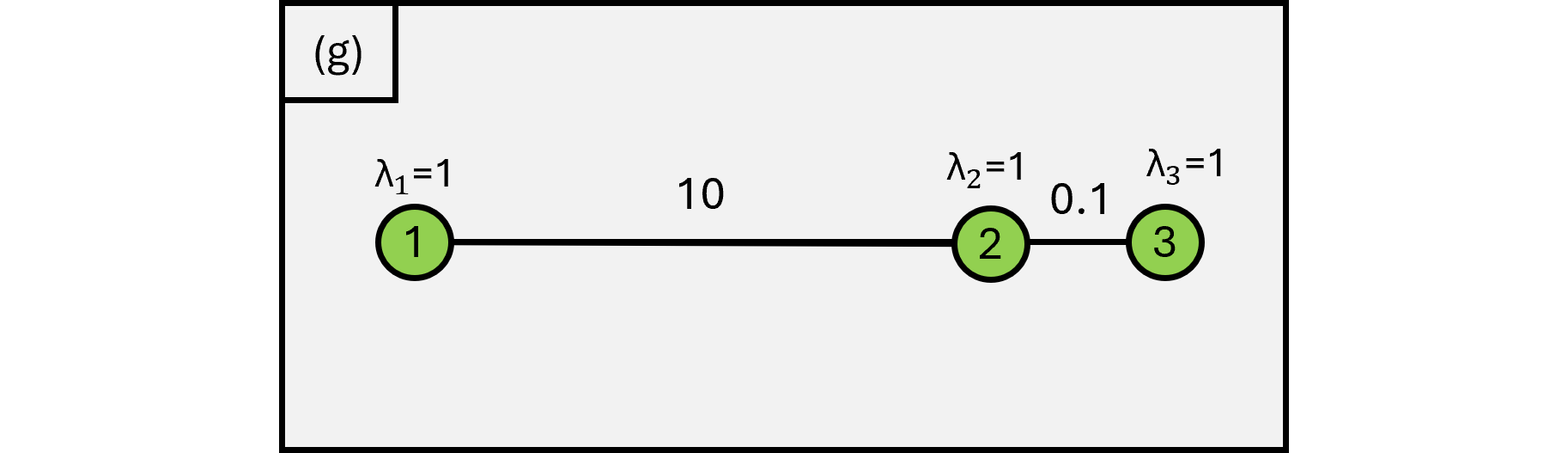}
    \caption{A three location example with points unevenly spaced along a line}
    \label{fig: long line example}
\end{figure}

It is intuitive that if two out of three locations are close together while the third is further away then it may be beneficial for the patroller to move between the closer locations a few times before making the much longer journey to the distant location.
If we imagine the two closer locations getting closer and closer together, they could eventually be treated as a super location where the patroller moves rapidly between them during the search.
Thus, the alternative cycle type that we propose takes the form $1 \rightarrow 2 \rightarrow 3 \rightarrow 2 \rightarrow 3$.
The best cycle type of this form can be found using nested ternary search, using an altered version of \eqref{eqn: sweep cycle} where $y_{i1}$ and $y_{i2}$ are given as follows: 
\begin{align*}
    y_{11} &= d_{12} + x_2 + d_{23} + x_3 + d_{32} + x_2 + d_{23} + x_3 + d_{31}, \\
    y_{12} &= 0, \\
    y_{21} &= d_{23} + x_3 + d_{32}, \\
    y_{22} &= d_{23} + x_3 + d_{31} + 2x_1 + d_{12}, \\
    y_{31} &= d_{32} + x_2 + d_{23}, \\
    y_{32} &= d_{31} + 2x_1 + d_{12} + x_2 + d_{23}. 
\end{align*}
Recall that for each $i \in [n]$, $y_{i1}$ and $y_{i2}$ denote the time spent away from location $i$ following the first and second searches respectively.
Scenario (g), shown in Figure \ref{fig: long line example}, is a line network with $d_{12} = 10$ and $d_{23}=0.1$.
The detection rate at all these locations is 1. The best simple cycle involves searching each location for $3.742$ time units at each location, which yields an expected time to discovery of $f(\sigma,\textbf{x}^*) = 14.667$.

The best sweep cycle involves searching locations 1 and 3 for 2.296 time units and location 2 for 1.552 time units, which yields an expected time to discovery of $f(\sigma_\leftrightarrow,\textbf{x}^*) = 14.081$.
The best cycle of the type $1 \rightarrow 2 \rightarrow 3 \rightarrow 2 \rightarrow 3$ involves searching location 1 for $x_1^* = 2.481$ time units and locations 2 and 3 for $x_2^*= x_3^* = 1.949$ time units.
These values result in an expected time to discover each attacker of $14.008$ time units, which is slightly better than the best sweep cycle.
\hfill $\blacktriangle$
\end{exmp}

\begin{exmp} \label{expl: h}
Scenario (h), shown in Figure \ref{fig: streched examples} is similar to scenario (a) but with location 1 moved so that its distance from both other locations is 10.
The distance between location 2 and 3 remains 1 and all detection rates are set to $\lambda=1$.

\begin{figure}[H]
    \centering
    \includegraphics[width = 0.8\textwidth]{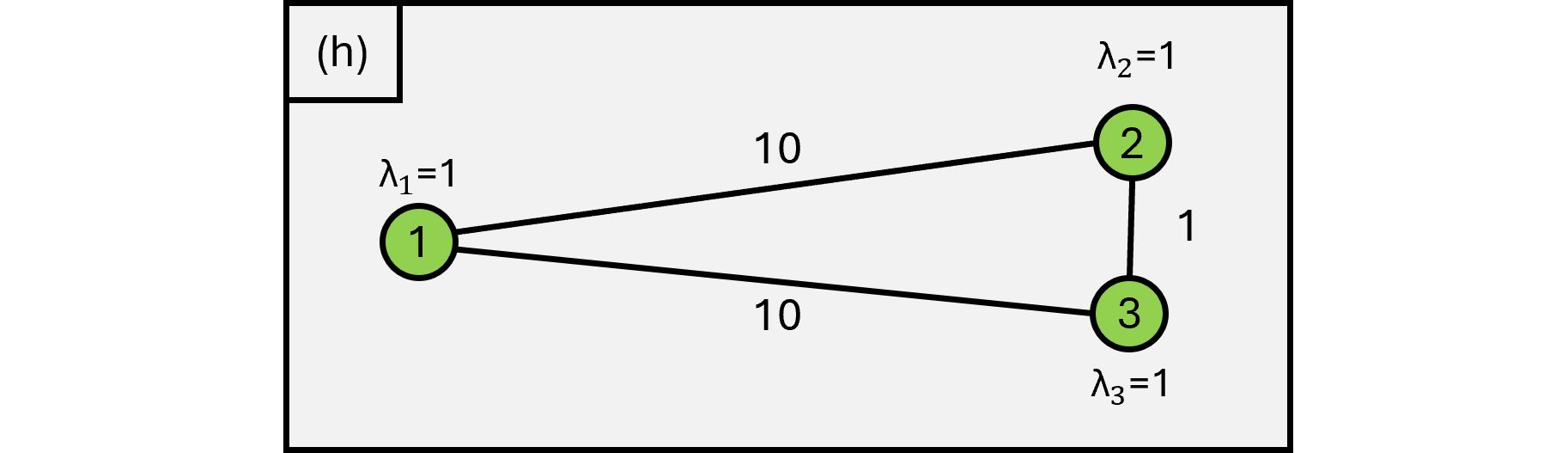}
    \caption{A three location example in a stretched triangle}
    \label{fig: streched examples}
\end{figure}

As in Example~\ref{expl: g}, we can find the best simple cycle and the best sweep cycle using ternary search and Algorithm \ref{al:iteration} as described in Section \ref{sec: ternary search}.
The best simple cycle involves searching each location for $x^*=3.778$ time units. 
These values result in an expected time to discovery of $f(\sigma,\textbf{x}^*) = 15.083$. 
The best sweep cycle visits the locations in the order that they are labeled.
Locations 1 and 3 are searched for $x_1^* = x_3^* = 2.373$ time units and location 2 is searched for $x_2^* = 1.471$ time units.
These values result in an expected time to discovery of $f(\sigma,\textbf{x}^*) = 14.884$.
This result demonstrates that the sweep cycle can perform better than a simple cycle even when the locations do not form a line network.
The best cycle of the type $1 \rightarrow 2 \rightarrow 3 \rightarrow 2 \rightarrow 3$ involves searching location 1 for $x_1^* = 2.937$ time units and locations 2 and 3 for $x_2^*= x_3^* = 1.941$ time units.
These values yield an expected time to discover each attacker of $14.830$ time units, which is slightly better than the best sweep cycle.
\hfill $\blacktriangle$
\end{exmp}

\begin{exmp} \label{expl: i and j} 
Scenarios (i) and (j), both given in Figure \ref{fig: rectangular cases}, each have homogeneous detection rates with $\lambda = 1$ for each of their four locations.

\begin{figure}[H]
    \centering
    \includegraphics[width = 0.8\textwidth]{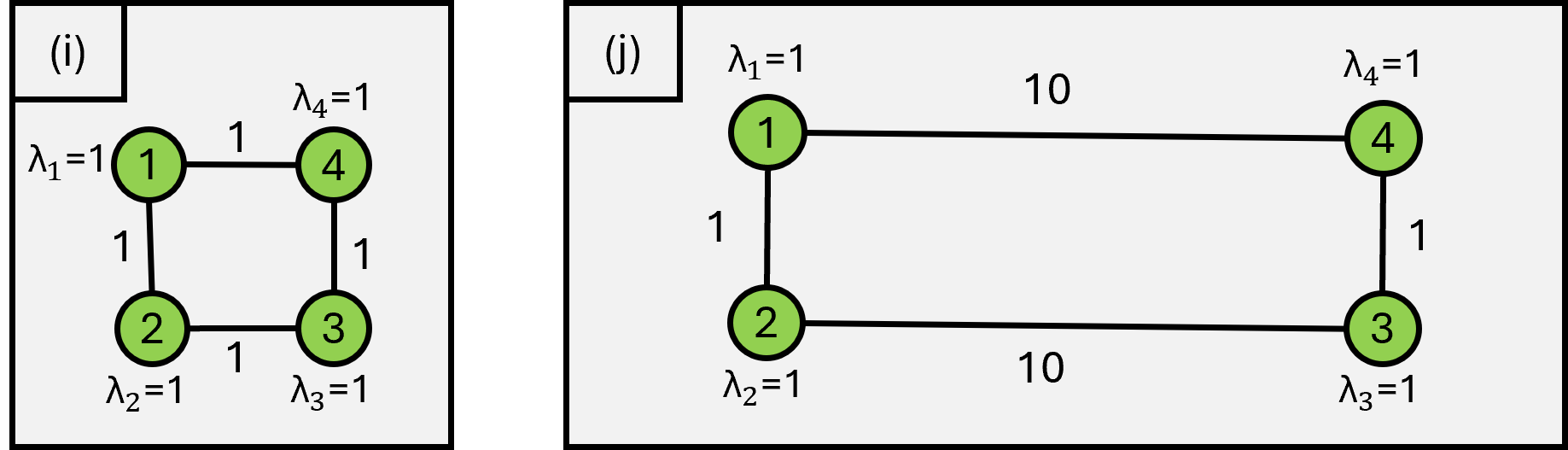}
    \caption{Four location examples in a ring}
    \label{fig: rectangular cases}
\end{figure}

Since both scenarios have homogeneous detection rates, the best simple cycle must have the same search duration at each location as discussed in Section \ref{sec: a simple cycle for homogeneous locations}. 
We can therefore calculate the best simple cycle using ternary search as described in Section \ref{sec: ternary search}.
For scenario (i) the best search duration for a simple cycle is 2.067 at all locations, which yields an expected time to discover each attacker of 8.136. 
For scenario (j) the best search duration for a simple cycle is 3.552 at all locations, which yields an expected time to discover each attacker of 21.366.
Unsurprisingly the best search duration is longer for scenario (j) than for scenario (i), as the total travel time for each cycle is larger in scenario (j).

As in scenarios (g) and (h), scenario (j) includes some travel times that are very large.
In scenario (j), locations 1 and 2 are very far from locations 3 and 4.
Therefore it may be beneficial to visit the first two locations a few times before traveling to the other two.
Similarly, once the patroller is on the right-hand side it may be beneficial to search location 3 and 4 a few times before returning.
Thus, we propose the following cycle type: $1 \rightarrow 2 \rightarrow 1 \rightarrow 2 \rightarrow 3 \rightarrow 4 \rightarrow 3 \rightarrow 4$.
A formula for the expected time to discovery can be found using an altered version of \eqref{eqn: sweep cycle} where $y_{i1}$ and $y_{i2}$ are as follows:
\begin{align*}
    y_{11} &= d_{12} + x_2 + d_{21}, \\
    y_{12} &= d_{12} + x_2 + d_{23} + x_3 + d_{34} + x_4 + d_{43} + x_3 + d_{34} + x_4 + d_{41}, \\
    y_{21} &= d_{21} + x_1 + d_{12}, \\
    y_{22} &= d_{23} + x_3 + d_{34} + x_4 + d_{43} + x_3 + d_{34} + x_4 + d_{41} + x_1 + d_{12}, \\
    y_{31} &= d_{34} + x_4 + d_{43}, \\
    y_{32} &= d_{34} + x_4 + d_{41} + x_1 + d_{12} + x_2 + d_{21} + x_1 + d_{12} + x_2 + d_{23}, \\
    y_{41} &= d_{43} + x_3 + d_{34}, \\
    y_{42} &= d_{41} + x_1 + d_{12} + x_2 + d_{21} + x_1 + d_{12} + x_2 + d_{23} + x_3 + d_{34}.
\end{align*}
Since the locations are homogeneous and the travel times between visits are identical for each location (one long trip and one short trip) it follows that using the same search duration for each location should perform well.
Because there is only one decision variable, the ternary search algorithm can be used to find the best policy of this type.
For scenario (i) the best search duration for a cycle of this type with all searches the same length is 3.308 at all locations, which yields an expected time to discover each attacker of 8.567. This cycle performs worse than the best simple cycle.
For scenario (j) the best search duration for a cycle of this type with all searches the same length is 2.060 at all locations, which yields an expected time to discover each attacker of 16.527.
This cycle performs significantly better than the best simple cycle and demonstrates that even in ring networks a simple cycle is not always optimal.
\hfill $\blacktriangle$
\end{exmp}

\section{Conclusions} \label{sec: conclusions and further work}
In this paper, we introduce a patrol problem in which a single patroller is tasked with protecting a set of dispersed locations from attacks. 
We first examine the special case where the travel time between locations is negligible.
We show that it is optimal for the patroller to continuously allocate a constant fraction of search effort to each location.
In addition, the fraction of search effort allocated to each location is inversely proportional to the detection rate at that location.

When the travel time cannot be ignored, the problem becomes much more challenging.
Motivated by ring and line networks, we propose two cycle types: simple cycles and sweep cycles.
For each cycle type, we derive a formula for the expected time to discover an attacker at a particular location.
We then present algorithms to compute the best simple cycle and the best sweep cycle.

%We give several examples of ring and line networks and showed how to compute the best simple cycles and the best sweep cycles.
%In each of these cases we used the nested ternary search algorithm to compute the best parameters of a given cycle type.

We numerically demonstrate our methods through a set of numerical examples.
These examples show that our algorithm can quickly compute the best simple cycle and the best sweep cycle for search problems of pratical sizes.
In addition, we also find that the optimal policy on a ring network is not necessarily a simple cycle, and the optimal policy on a line network is not necessarily a sweep cycle.
The optimal policy depends highly on the structure and parameters of each patrol problem.

%Since we have no way of deciding the best cycle type the best we can do is make an informed guess of what cycle type will perform well and then find the best parameters of that cycle type.

%A possible area of further work is to develop heuristics for a general network.
%Further work is also needed to prove Conjecture \ref{conj: multiple visits}.

There are a few possible future research directions.
Whereas we develop practical patrol patterns such as simple cycles and sweep cycles, the patrol problem in general remains unsolved.
It would be helpful to develop additional patrol patterns based on the specific structure of the locations.
It would also be helpful to develop effective heuristic methods that apply to any network structure.
Finally, if some locations are more important than the other locations, then the patroller may need to formulate a new objective function that accounts for these differences.

%In this paper, we have assumed that the cost of an attack is the same at each location.
%A natural extension to this problem is to weigh attacks at different locations differently.
%These weights could be used to indicate higher value assets that could be damaged or stolen, more dangerous machinery that is more likely to cause harm if broken, or more sensitive sources of information that would be more harmful or costly if leaked.
%This information could be included in the model via an additional parameter $c_i$ where, for each $i \in [n]$, $c_i$ indicates the cost per unit time of an attacker being present at location $i$.
%The patroller's new objective would be to minimize the maximum expected cost of an attack across all locations.

\section{Acknowledgments}
This paper is based on work completed while Edward Mellor was part of the EPSRC funded STOR-i centre for doctoral training (EP/S022252/1). 

\bibliographystyle{apalike}
\bibliography{patrol}

@article{Lin2013,
  title={A graph patrol problem with random attack times},
  author={Lin, Kyle Y and Atkinson, Michael P and Chung, Timothy H and Glazebrook, Kevin D},
  journal={Operations Research},
  volume={61},
  number={3},
  pages={694--710},
  year={2013},
  publisher={INFORMS}
}

@article{mcgrath2017robust,
  title={Robust patrol strategies against attacks at dispersed heterogeneous locations},
  author={McGrath, Richard G and Lin, Kyle Y},
  journal={International Journal of Operational Research},
  volume={30},
  number={3},
  pages={340--359},
  year={2017},
  publisher={Inderscience Publishers (IEL)}
}

@article{Lin2014,
  title={Optimal patrol to uncover threats in time when detection is imperfect},
  author={Lin, Kyle Y and Atkinson, Michael P and Glazebrook, Kevin D},
  journal={Naval Research Logistics (NRL)},
  volume={61},
  number={8},
  pages={557--576},
  year={2014},
  publisher={Wiley Online Library}
}

@article{Chaiken1978,
  title={A patrol car allocation model: Capabilities and algorithms},
  author={Chaiken, Jan M and Dormont, Peter},
  journal={Management Science},
  volume={24},
  number={12},
  pages={1291--1300},
  year={1978},
  publisher={INFORMS}
}

@article{Birge1989,
  title={Modelling rural police patrol},
  author={Birge, JR and Pollock, SM},
  journal={Journal of the Operational Research Society},
  volume={40},
  number={1},
  pages={41--54},
  year={1989},
  publisher={Taylor \& Francis}
}

@article{alpern2011,
  title={Patrolling games},
  author={Alpern, Steve and Morton, Alec and Papadaki, Katerina},
  journal={Operations Research},
  volume={59},
  number={5},
  pages={1246--1257},
  year={2011},
  publisher={INFORMS}
}

@article{chelst1978,
  title={An algorithm for deploying a crime directed (tactical) patrol force},
  author={Chelst, Kenneth},
  journal={Management Science},
  volume={24},
  number={12},
  pages={1314--1327},
  year={1978},
  publisher={INFORMS}
}

@article{szechtman2008,
  title={Models of sensor operations for border surveillance},
  author={Szechtman, Roberto and Kress, Moshe and Lin, Kyle and Cfir, Dolev},
  journal={Naval Research Logistics (NRL)},
  volume={55},
  number={1},
  pages={27--41},
  year={2008},
  publisher={Wiley Online Library}
}

@article{lee1979,
  title={Optimizing state patrol manpower allocation},
  author={Lee, Sang M and Franz, Lori Sharp and Wynne, A James},
  journal={Journal of the Operational Research Society},
  volume={30},
  number={10},
  pages={885--896},
  year={1979},
  publisher={Taylor \& Francis}
}

@article{taylor1985,
  title={An integer nonlinear goal programming model for the deployment of state highway patrol units},
  author={Taylor III, Bernard W and Moore, Laurence J and Clayton, Edward R and Davis, K Roscoe and Rakes, Terry R},
  journal={Management Science},
  volume={31},
  number={11},
  pages={1335--1347},
  year={1985},
  publisher={INFORMS}
}

@article{papadaki2016,
  title={Patrolling a border},
  author={Papadaki, Katerina and Alpern, Steve and Lidbetter, Thomas and Morton, Alec},
  journal={Operations Research},
  volume={64},
  number={6},
  pages={1256--1269},
  year={2016},
  publisher={INFORMS}
}

@article{alpern2019,
  title={Optimizing periodic patrols against short attacks on the line and other networks},
  author={Alpern, Steve and Lidbetter, Thomas and Papadaki, Katerina},
  journal={European Journal of Operational Research},
  volume={273},
  number={3},
  pages={1065--1073},
  year={2019},
  publisher={Elsevier}
}

@inproceedings{alpern2016,
  title={Patrolling a pipeline},
  author={Alpern, Steve and Lidbetter, Thomas and Morton, Alec and Papadaki, Katerina},
  booktitle={Decision and Game Theory for Security: 7th International Conference, GameSec 2016, New York, NY, USA, November 2-4, 2016, Proceedings 7},
  pages={129--138},
  year={2016},
  organization={Springer}
}

@article{Garrac2019,
title = {Continuous patrolling and hiding games},
journal = {European Journal of Operational Research},
volume = {277},
number = {1},
pages = {42-51},
year = {2019},
issn = {0377-2217},
doi = {https://doi.org/10.1016/j.ejor.2019.02.026},
author = {Tristan Garrec},
}

@article{bui2023,
  title={Optimal patrolling strategies for trees and complete networks},
  author={Bui, Thuy and Lidbetter, Thomas},
  journal={European Journal of Operational Research},
  volume={311},
  number={2},
  pages={769--776},
  year={2023},
  publisher={Elsevier}
}

@phdthesis{mellor2024,
    author = {Mellor, Edward},
    title = {Searching and Patrolling Dispersed Locations},
    school = {Lancaster University} ,
    year = {2024}
}

\section{Appendix}

\subsection{Proof of Proposition 4.10} \label{app: unimodality appendix}

Let
\begin{equation} \label{eqn: appendix homogeneous simple cycle}
    F(x) = 
    \frac{D(\sigma^*) + nx-x}{D(\sigma^*) + nx} \left( \frac{1}{\lambda} - \frac{D(\sigma^*) + nx-x}{2} + \frac{D(\sigma^*) + nx-x}{1-e^{-\lambda x}} \right).
\end{equation}

We want to show that \eqref{eqn: appendix homogeneous simple cycle} has a unique minimum for $x > 0$.
This can be done by showing that the second derivative is positive, i.e. proving convexity.
Since we have fixed $\sigma^*$ we will, for ease of notation, write $d = D(\sigma^*)$ throughout this discussion.
The second derivative can then be written as 
\begin{equation} \label{eqn: second derivative}
    F''(x) = 
    \frac{G(x)}{\lambda (d + nx)^3 (1-e^{-\lambda x})^3},
\end{equation}
where
\begin{align}
    G(x) = 
    & \lambda^3(d+nx)^2(d+(n-1)x)^2 e^{-\lambda x}(1+e^{-\lambda x})  \label{eqn: 2nd div line 1} \\
    & -2\lambda^2(d+nx)(d+(n-1)x)((n-2)d+n(n-1)x)e^{-\lambda x}(1-e^{-\lambda x})  \label{eqn: 2nd div line 2}\\
    & + 2nd(1-e^{-3\lambda x}-3e^{-\lambda x}+3e^{-2\lambda x})  \label{eqn: 2nd div line 3}\\
    & + d^2 \lambda (1+e^{-3\lambda x}-e^{-\lambda x}-e^{-2\lambda x}). \label{eqn: 2nd div line 4}
\end{align}
Recall that $n \in \mathbb{N} \backslash \{ 1\}$ and that $d, \lambda > 0$.
It is therefore clear that the denominator in \eqref{eqn: second derivative} is positive.
Since we can rewrite line \eqref{eqn: 2nd div line 3} as $2nd(1-e^{-\lambda x})^3$ it is also clear that this term is positive.
The final term given by line \eqref{eqn: 2nd div line 4} is also positive because $g(y) = e^{-\lambda x y}$ is convex in $y$, so $g(0) + g(3) > g(1) + g(2)$.

Thus, to prove that \eqref{eqn: second derivative} is positive it is sufficient to show that the sum of \eqref{eqn: 2nd div line 1} and \eqref{eqn: 2nd div line 2} is positive.
These terms have a common factor of 
$$ \lambda^2(d+nx)(d+(n-1)x)e^{-\lambda x} $$ which is clearly positive.
We now proceed by induction on $n$.
Define
\begin{align}
    f_n(x) =& \lambda(d+nx)(d+(n-1)x)(1+e^{-\lambda x}) \\
    & -2((n-2)d+n(n-1)x)(1-e^{-\lambda x})
\end{align}
First, we consider
\begin{equation*}
    f_2(x) = \lambda(d+2x)(d+x)(1+e^{-\lambda x}) - 4x(1-e^{-\lambda x}).
\end{equation*}
This expression is clearly increasing in $d$ so it is sufficient to show that $f_2(x) > 0$ when $d = 0$.
In this case 
\begin{equation*}
    f_2(x) = 2\lambda x^2(1+e^{-\lambda x}) - 4x(1-e^{-\lambda x}).
\end{equation*}
Differentiating this with respect to $x$ gives us
$$ f'_2(x) = 4 \lambda x + 4 e^{-\lambda x} - 2 \lambda^2 x^2 e^{-\lambda x} - 4$$
Now define $p = \lambda x$ and consider 
$$ g(p) = 4p + 4e^{-p} - 2p^2e^{-p} - 4 .$$
Its first, second and third derivatives are
\begin{align*}
    g'(p) &= 4 - 4e^{-p} - 4pe^{-p} + 2p^2 e^{-p}, \\
    g''(p) &= 2pe^{-p} (4-p),\\
    g'''(p) &= 2e^{-p} (p^2 -6p +4).
\end{align*}
By examining $g''(p)$, we can see that $g'(p)$ has a stationary point when $p = 4$ and since $g'''(4) < 0$, this stationary point is a maximum. 
Given that $g'(0) =0$ and $g'(p) \rightarrow 4$ as $p \rightarrow \infty$ and the stationary point is a maximum, $g'(p)$ must always be positive.
Hence $f'_2(x) \geq 0$ and $f_2(x) \geq 0$ for all $x > 0$.

Moving on to the inductive step, we have 
\begin{align*}
    f_{n+1}(x) &- f_n(x) \\
    &= -2d + 2\lambda n x e^{-\lambda x} + 2 \lambda d x e^{-\lambda x} - 4nx + 2de^{-\lambda x} + 2 \lambda n x^2 + 2 \lambda d x + 4nxe^{-\lambda x} \\
    %&= 2d(e^{-\lambda x}-1) + 2 \lambda dx (e^{-\lambda x} +1) + 4nx (e^{-\lambda x}-1) + 2\lambda n x^2(1+e^{-\lambda x}).
    &=2d (\lambda x + \lambda x e^{-\lambda x} + e^{-\lambda x} -1) + 2nx (2 e^{-\lambda x} - 2 + \lambda x + \lambda x e^{-\lambda x}).
\end{align*}
Define $p = \lambda x$ and consider
$$ g(p) = p + pe^{-p} + e^{-p} -1 $$
and 
$$ h(p) = 2 e^{-p} - 2 + p + pe^{-p} .$$
It is sufficient to show that $g(p) \geq 0$ and $h(p) \geq 0$.
We have 
$$ g'(p) = 2d (1+e^{-p}-pe^{-p}-e^{-p}) = 2d(1-pe^{-p}). $$
The function $1-pe^{-p}$ is non-negative with a minimum when $p=1$ and thus $g'(p) \geq 0$.
Since $g(0) =0$ this shows that $g(p) \geq 0$.

It now only remains to show that $h(p) \geq 0$.
%Since $2nx$ is always positive, it is sufficient to show that
%$$ u(p) = 2 e^{-p} - 2 + p + pe^{-p} \geq 0 .$$
We have
\begin{align}
    h'(p) &= 1 - (1+p)e^{-p} \\
    h''(p) &= p e^{-p}
\end{align}
Is is clear that $h''(p) \geq 0$ so since $h'(0) = 0$ and $h(0) = 0$ it follows that $h'(p) \geq 0$ and $h(p) \geq 0$ for $p\geq 0$. This confirms that $f_{n+1}(x) - f_{n}(x) \geq 0$.

Given that $f_2(x) \geq 0$ and $f_{n+1}(x) - f_{n}(x) \geq 0$, we conclude that for all $n \in \mathbb{N} \backslash \{ 1\}$, $f_n(x) \geq 0$. This completes the proof that the function $F(x)$ in \eqref{eqn: appendix homogeneous simple cycle} is convex.

\end{document}